\newcommand{\bc}{\begin{center}}
\newcommand{\ec}{\end{center}}
\newcommand{\ba}{\begin{array}}
\newcommand{\ea}{\end{array}}
\renewcommand{\a}{\alpha }
\renewcommand{\b}{\beta }
\renewcommand{\d}{\delta }
\renewcommand{\l}{\lambda }
\renewcommand{\dfrac}{\displaystyle\frac }
\numberwithin{equation}{section}
\theoremstyle{plain}
\newtheorem{thm}{Theorem}
\newtheorem{defn}{Definition}
\newtheorem{lem}{Lemma}
\newtheorem{rem}{Remark}
\newtheorem{cor}{Corollary}
\newtheorem{proposition}{Proposition}
\def\ZZ{{\mathbb Z}}
\def\RR{{\mathbb R}}
\def\CC{{\mathbb C}}
\def\begeq{\begin{equation}}
\def\endeq{\end{equation}}
\begin{document}
\title[Uniqueness of the Mean Field Equation and Rigidity of Hawking Mass]{Uniqueness of the Mean Field Equation and Rigidity of Hawking Mass}
\author{Yuguang Shi, Jiacheng Sun, Gang Tian, Dongyi Wei}
\address{School of Mathematical Sciences and BICMR, Peking University, Yiheyuan Road 5, Beijing, P.R.China, 100871}
\email{ygshi@math.pku.edu.cn, sunxujason@pku.edu.cn, tian@math.princeton.edu, jnwdyi@pku.edu.cn}
\maketitle
\begin{abstract}
In this paper, we prove that the even solution of the mean field equation $\Delta u=\lambda(1-e^u) $ on $S^2$ must be axially symmetric when $4<\lambda \leq 8$. In particular, zero is the only even solution for $\lambda=6$. This implies the rigidity of Hawking mass for stable constant mean curvature(CMC) sphere with even symmetry.\end{abstract}
\section{Introduction}
Consider the mean field equation
\begin{equation}\label{eq:1}
\frac{\a}{2}\Delta u+\frac{e^u}{\int_{S^2}e^ud\omega}-1=0,\ \ \text{on}\ \ S^2,
\end{equation}
where $ \Delta$ denotes the Laplace Beltrami operator with respect to $(S^2,g_0)$, $(S^2,g_0)$ is the unit sphere of $\RR^3$ equipped with the metric
$g_0$ induced from the flat metric of $\RR^3$, the volume form $d\omega$ is normalized so that $\int_{S^2}d\omega=1.$ Clearly, equation \eqref{eq:1} is the Euler-Lagrange equation of the nonlinear functional on the Sobolev space $H^1(S^2)$
$$J_{\a}(u)=\frac{\a}{4}\int_{S^2}e^ud\omega+\int_{S^2}ud\omega-\ln\int_{S^2}e^ud\omega.$$  Since \eqref{eq:1} is invariant under
adding a constant, we can normalize $\int_{S^2}e^ud\omega=1, $ then \eqref{eq:1} becomes
\begin{equation}\label{eq:2}
\frac{\a}{2}\Delta u+{e^u}-1=0,\ \ \text{on}\ \ S^2.
\end{equation}
Recently, Gui and Moradifam \cite{Gui} proved that for $\a\geq\frac{1}{2}$, any solution of \eqref{eq:2} in
$$\mathcal{M}=\{u\in H^1(S^2):\int_{S^2}e^ux_i=0,\ i=1,2,3\}$$ must be $0$. This implies the Moser-Trudinger inequality $$\inf_{u\in\mathcal{M}}J_{\a}(u)=0,\ \ \text{for}\ \ \a\geq\frac{1}{2}.$$
Inspired by the above result, we study the case $\a\geq\frac{1}{4}$ with even symmetry, i.e. $u(x)=u(-x),$ and our result is

\begin{thm}\label{1.1}
Let $\frac{1}{4}\leq\a<1$ and assume $u$ a solution of \eqref{eq:2},  $u(x)=u(-x),$ then $u$ must be
axially symmetric, moreover if $\a=\frac{1}{3}$ then $u=0.$
\end{thm}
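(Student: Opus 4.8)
We first notice that the evenness hypothesis already implies the center–of–mass condition: if $u(x)=u(-x)$ then $e^u x_i$ is an odd function on $S^2$, so $\int_{S^2}e^u x_i\,d\omega=0$ for $i=1,2,3$, i.e. $u\in\mathcal{M}$. Hence for $\tfrac12\le\a<1$ the theorem of Gui--Moradifam quoted above gives $u\equiv 0$, which is certainly axially symmetric, and we are reduced to the range $\tfrac14\le\a<\tfrac12$, i.e. $\rho:=\tfrac2\a\in(4,8]$ in the normalized form $\Delta u=\rho(1-e^u)$. The plan then has three stages: (i) prove that $u$ is axially symmetric; (ii) for axially symmetric $u$, reduce \eqref{eq:2} to a second order ODE on $[-1,1]$; (iii) analyze that ODE at $\rho=6$.

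For stage (i) the plan is to run a moving plane argument on $S^2$. For a unit vector $\nu$ let $\sigma_\nu$ be the reflection of $S^2$ across the plane $\nu^\perp$, let $\Omega_\nu^{\pm}=\{\pm\langle x,\nu\rangle>0\}$ be the two open hemispheres, and put $w_\nu=u-u\circ\sigma_\nu$. Since $\sigma_\nu$ is an isometry, $w_\nu$ vanishes on the great circle $\partial\Omega_\nu^{+}$ and solves the linearized equation $\tfrac\a2\Delta w_\nu+c_\nu(x)\,w_\nu=0$ with $c_\nu(x)=\int_0^1 e^{tu(x)+(1-t)u(\sigma_\nu x)}\,dt>0$. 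Because $u$ is even, the antipodal map interchanges $\Omega_\nu^{+}$ and $\Omega_\nu^{-}$ while preserving $e^u$, so
\begin{equation*}
\int_{\Omega_\nu^{+}}e^u\,d\omega=\int_{\Omega_\nu^{-}}e^u\,d\omega=\tfrac12 .
\end{equation*}
This equidistribution of the mass over the two caps is exactly what pushes the admissible range from $\rho\le 4$ (the classical/Gui--Moradifam case with no symmetry, mass $\le 1$ globally) to $\rho\le 8$ (mass $\le\tfrac12$ on each hemisphere). Rotating $\nu$ to a critical position and applying the sphere covering inequality of Gui--Moradifam to the ordered pair on whichever hemisphere makes $u\ge u\circ\sigma_\nu$ (or $u\le u\circ\sigma_\nu$), the mass split above forces the equality case, whence $w_\nu\equiv 0$, i.e. $u=u\circ\sigma_\nu$. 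Organizing the rotating–plane procedure so that it produces reflection symmetry across a one–parameter family of great circles through a common axis $a$ then shows, via the composition of two such reflections being a rotation about $a$, that $u$ is invariant under the whole circle subgroup fixing $a$.

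For stages (ii)--(iii): writing $u=u(x)$ with $x=\cos\theta\in[-1,1]$ the polar coordinate about $a$, equation \eqref{eq:2} becomes
\begin{equation*}
\big((1-x^2)u'\big)'=\rho\,(1-e^u)\ \ \text{on }(-1,1),\qquad u(-x)=u(x),
\end{equation*}
together with the regularity constraint $(1-x^2)u'\to 0$ as $x\to\pm1$, which (integrating the equation from $0$) is equivalent to $\int_{-1}^{1}e^u\,dx=2$. This is a shooting problem in $u_0=u(0)$ with $u'(0)=0$, whose trivial branch is $u\equiv 0$. Linearizing along $u\equiv 0$ gives the Legendre equation $\big((1-x^2)\psi'\big)'+\rho\,\psi=0$, whose solutions regular and even on $[-1,1]$ occur only at $\rho=k(k+1)$ with $k$ even, the first being $\rho=6$ with $\psi=P_2$. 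Thus $\rho=6$ (i.e. $\a=\tfrac13$) is the first value at which a nontrivial even solution can branch off $u\equiv 0$; a Lyapunov--Schmidt expansion should show that this bifurcation is transversal (the branch crosses $\rho=6$), so that at $\rho=6$ exactly the only even solution near $0$ is $u\equiv 0$, while the a priori bound available in the range $\rho<8$ — even solutions cannot concentrate, since a blow–up point would be matched by its antipode and two points need mass $\ge 16\pi>8\pi\,\rho/8$ — together with a global bifurcation/degree argument rules out nontrivial even solutions far from $0$. Hence $u\equiv 0$ when $\a=\tfrac13$.

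I expect the main obstacle to be stage (i) at the endpoint $\rho=8$ ($\a=\tfrac14$): there the moving plane estimate is exactly borderline, so one must deal with the equality case of the sphere covering inequality and, more delicately, arrange the family of reflection symmetries so as to obtain genuine axial symmetry rather than merely a finite dihedral symmetry (this requires the critical reflection plane to depend continuously on the starting configuration and to sweep out an interval of directions). A secondary difficulty is the bifurcation analysis at the degenerate value $\rho=6$, where the linearization fails to be injective on even functions and a higher–order expansion, rather than the implicit function theorem, is needed to locate the branch.
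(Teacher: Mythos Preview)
Your stage (i) contains a genuine gap. Applying the sphere covering inequality on a single hemisphere $\Omega_\nu^{+}$ where $u\ge u\circ\sigma_\nu$ gives
\[
\int_{\Omega_\nu^{+}}\bigl(e^{v_1}+e^{v_2}\bigr)\,dy\;\ge\;8\pi,
\]
but, since $\sigma_\nu$ is an isometry, the left–hand side equals $\tfrac{2}{\a}\int_{\Omega_\nu^{+}}e^{u}+\tfrac{2}{\a}\int_{\Omega_\nu^{-}}e^{u}=\tfrac{2}{\a}\cdot 4\pi=8\pi/\a$. Thus the inequality reads $8\pi/\a\ge 8\pi$, which is always true for $\a\le 1$ and never forces the equality case; nothing is gained from the mass split $\int_{\Omega_\nu^{\pm}}e^u\,d\omega=\tfrac12$. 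What the paper actually exploits is quite different: one reflects across the plane orthogonal to $\overrightarrow{OP}$ for a \emph{critical point} $P$ of $u$. Then $\widetilde u=u-u^*$ vanishes at $P$ together with its gradient, and the oddness $\widetilde u(-y)=-\widetilde u(y)$ (coming from $u(x)=u(-x)$) forces the local vanishing order $m$ to be odd, hence $m\ge 3$. A topological argument on the nodal set then produces at least four simply connected nodal domains $\Omega_1,\Omega_2,-\Omega_1,-\Omega_2$ inside the hemisphere, and \emph{two} applications of the sphere covering inequality (on $\Omega_1$ and $\Omega_2$), combined with $e^{v_1}(-y)=e^{v_2}(y)$, yield $4\pi/\a\ge\sum_{j=1}^4\int_{\Omega_j}e^{v_1}>16\pi$, i.e. $\a<\tfrac14$. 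Your proposal skips precisely this nodal–multiplicity step, which is where the threshold $\a\ge\tfrac14$ actually enters.

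Your plan for passing from reflection symmetries to axial symmetry is also not the paper's, and is not obviously workable: a moving–plane procedure typically produces symmetry about one extremal plane, not a continuous family, and the step ``organizing the rotating–plane procedure so that it sweeps out an interval of directions'' needs a mechanism you do not supply. The paper instead iterates the critical–point lemma to obtain symmetry about three mutually orthogonal planes, and then uses a separate algebraic identity for the Killing fields $X_i$, namely $\int_{S^2}X_1u\,X_2u\,X_3u\,e^u=0$, together with a nodal argument based on the Bol–type first–eigenvalue lemma, to force one $X_iu\equiv 0$.

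Finally, your stage (iii) is far more elaborate than necessary and the hard parts (transversality of the bifurcation at $\rho=6$, a global degree argument excluding solutions away from $0$) are only asserted. The paper disposes of the axially symmetric case at $\lambda=6$ in a few lines: testing the equation against the second spherical harmonic $F=3x_1^2-1$ in the Kazdan--Warner identity gives, for a solution axially symmetric about the $x_1$–axis,
\[
0\;=\;\lambda(6-\lambda)\int_{S^2}Fe^u\;=\;3\int_{S^2}(1-x_1^2)\,|\nabla u|^2,
\]
whence $u\equiv 0$. No bifurcation or blow–up analysis is needed; note also that the paper later shows there \emph{are} nonzero axially symmetric solutions for $4<\lambda\le 8$, $\lambda\ne 6$, so any uniqueness argument must use $\lambda=6$ in an essential way, as this identity does.
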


The proof of Theorem \ref{1.1} is based on the Sphere Covering Inequality and the nodal set
analysis. The Sphere Covering Inequality in \cite{Gui} is stated as follows:

\begin{lem}\label{1.2}Let $\Omega$ be a simply-connected subset of $\RR^2$ and assume $w_i\in C^2(\overline{\Omega})$, i=1,2 satisfy $$\triangle w_i+e^{w_i}=f_i(y),$$ where $f_2\geq f_1\geq0$ in $\Omega$, if $w_2> w_1$ in $\Omega$ and $w_2= w_1$ on $\partial\Omega$, then $$\int_{\Omega} (e^{w_1}+e^{w_2})dy\geq 8\pi.$$ Moreover, if the equality holds then $f_1\equiv f_2\equiv0.$
\end{lem}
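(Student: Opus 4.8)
The plan is to deduce this two--function inequality from the classical Bol (Bol--Alexandrov) isoperimetric inequality by slicing along the level sets of the difference $v:=w_2-w_1$. Geometrically, each $w_i$ defines a conformal metric $g_i=e^{w_i}|dy|^2$ on $\Omega$ whose Gaussian curvature is $K_i=-\tfrac12 e^{-w_i}\Delta w_i=\tfrac12-\tfrac12 e^{-w_i}f_i\le\tfrac12$, since $f_i\ge0$; its area is $A_i=\int_\Omega e^{w_i}\,dy$, and the assertion is exactly $A_1+A_2\ge 8\pi$, where $8\pi$ is the area of the round sphere of curvature $\tfrac12$. The function $v$ is positive in $\Omega$ and vanishes on $\partial\Omega$, and the decisive structural observation is that on each level set $\gamma_t=\{v=t\}$ one has $w_2=w_1+t$, hence $e^{w_2}=e^{t}e^{w_1}$ there; thus $g_1$ and $g_2$ differ on $\gamma_t$ by the constant conformal factor $e^{t}$.

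First I would set, for regular values $t\in(0,\max v)$, $U_t=\{v>t\}$, $\alpha(t)=\int_{U_t}e^{w_1}\,dy$, $\beta(t)=\int_{U_t}e^{w_2}\,dy$, and let $\ell_1(t)$ be the $g_1$--length of $\gamma_t$, so that its $g_2$--length is $e^{t/2}\ell_1(t)$. The coarea formula together with $e^{w_2}=e^{t}e^{w_1}$ on $\gamma_t$ gives the clean relation $\beta'(t)=e^{t}\alpha'(t)$, with $\alpha'<0$. Next I would extract three inequalities at each level: (i) Bol's inequality on $(U_t,g_i)$, namely $\ell_1(t)^2\ge\alpha(t)\bigl(4\pi-\tfrac12\alpha(t)\bigr)$ and $e^{t}\ell_1(t)^2\ge\beta(t)\bigl(4\pi-\tfrac12\beta(t)\bigr)$; (ii) Cauchy--Schwarz along $\gamma_t$, giving $\ell_1(t)^2\le(-\alpha'(t))\int_{\gamma_t}|\nabla v|\,ds$; and (iii) the divergence theorem combined with the two equations, which yields $\int_{\gamma_t}|\nabla v|\,ds=-\int_{U_t}\Delta v=(\beta(t)-\alpha(t))-\int_{U_t}(f_2-f_1)\,dy\le\beta(t)-\alpha(t)$, using $f_2\ge f_1$. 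Assembling (i)--(iii) produces the coupled differential inequalities
\[
-\alpha'(t)\,[\beta(t)-\alpha(t)]\ \ge\ \alpha(t)\left(4\pi-\tfrac12\alpha(t)\right),
\qquad
-\beta'(t)\,[\beta(t)-\alpha(t)]\ \ge\ \beta(t)\left(4\pi-\tfrac12\beta(t)\right).
\]

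I would then integrate this system on $(0,\max v)$, where the endpoint behaviour is $(\alpha,\beta)(0^+)=(A_1,A_2)$ and $(\alpha,\beta)\to(0,0)$ as $t\to\max v$, and compare it with the exact solution coming from the extremal configuration, in which $\Omega$ is a disk, $w_1,w_2$ are radial, $f_1=f_2=0$, and $g_1,g_2$ are the two complementary geodesic caps of the round sphere of curvature $\tfrac12$; this comparison is what pins $A_1+A_2$ from below by $8\pi$. Tracking the equality cases gives the rigidity: equality forces Bol's inequality to be saturated at a.e.\ level, so each super-level set is a geodesic disk of curvature exactly $\tfrac12$, i.e.\ $K_i\equiv\tfrac12$ and hence $f_i\equiv0$; equivalently, equality in (iii) forces $\int_{U_t}(f_2-f_1)\equiv0$. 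Either way $f_1\equiv f_2\equiv0$.

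The main obstacle I anticipate is twofold. The analytic heart is the comparison step: one must show that among all $(\alpha,\beta)$ obeying the two inequalities above with the stated endpoints, the quantity $\alpha(0)+\beta(0)$ is minimized by the spherical-cap solution, which needs a genuine monotonicity/ODE-comparison argument (for instance, passing to $S=\alpha+\beta$, $D=\beta-\alpha$, where subtracting the two inequalities gives the clean relation $-D'\ge 4\pi-\tfrac12 S$) rather than a one-line integration. The second, more technical, difficulty is justifying Bol's inequality on $U_t$, since the super-level sets need not be simply connected; this is where the argument is most delicate, requiring either a Sard-type reduction to regular values or a hole-filling maneuver combined with the monotonicity of $s\mapsto s\bigl(4\pi-\tfrac12 s\bigr)$. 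Once the almost-everywhere regular-value reduction is in place, standard elliptic regularity and the $C^2$ smoothness of $w_i$ make the coarea and divergence-theorem manipulations rigorous.
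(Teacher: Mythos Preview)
The paper does not give its own proof of this lemma: it is quoted verbatim as the ``Sphere Covering Inequality'' of Gui and Moradifam \cite{Gui} and used as a black box. So there is no in-paper argument to compare against; your write-up is in effect an attempt to reproduce the proof from \cite{Gui}.

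On that score, your plan is the right one and matches the architecture of the original proof: slice along the level sets of $v=w_2-w_1$, use that $e^{w_2}=e^{t}e^{w_1}$ on $\{v=t\}$, apply Bol's isoperimetric inequality in each conformal metric on the super-level sets, and combine with Cauchy--Schwarz and the divergence identity $\int_{\gamma_t}|\nabla v|\le \beta(t)-\alpha(t)$ to obtain the coupled differential inequalities you display. The reduction $-D'\ge 4\pi-\tfrac12 S$ you derive by subtracting is exactly the relation that drives the comparison.

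The two obstacles you single out are not side issues but the substance of the argument, and as written your proposal does not close them. For the topology of $U_t$: in \cite{Gui} one fills in the holes of each connected component, observes that Bol still applies to the simply connected hull with curvature $\le\tfrac12$, and then uses that $s\mapsto s(4\pi-\tfrac12 s)$ is increasing on $[0,4\pi]$ together with the bound $\int_\Omega e^{w_1}\le 8\pi$ (else there is nothing to prove) to see that enlarging the region only strengthens the inequality; this step needs to be spelled out, not just named. For the ODE comparison: your endpoint $(\alpha,\beta)\to(0,0)$ and the single relation $-D'\ge4\pi-\tfrac12 S$ are not enough by themselves; one must also exploit $\beta'=e^{t}\alpha'$ (equivalently $D'/S'=\tanh(t/2)$) to turn the pair of inequalities into a genuine differential inequality for one unknown and then compare with the explicit spherical-cap profile. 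Until that monotonicity argument is written down, the lower bound $A_1+A_2\ge8\pi$ has not been established. Your rigidity discussion is correct in spirit: equality in the divergence step forces $f_2\equiv f_1$, and equality in Bol forces $K_i\equiv\tfrac12$, hence $f_i\equiv0$.
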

The following is a consequence of Bol's inequality and the equimeasurable symmetric
rearrangement (see Lemma 3.1 in \cite{Lin1}, Proposition 3.2 in \cite{Gui}, Proposition 3.3 in \cite{Lin2}) and can be regarded
as a limiting case of the Sphere Covering Inequality.
\begin{lem}\label{1.3} Let $\Omega$ be a simply-connected subset of $\RR^2$ and assume that
$w\in C^2(\overline{\Omega})$ satisfies $\triangle w+e^{w}>0,$ in $\overline{\Omega} $ and
$\int_{\Omega} e^{w}dy\leq 8\pi$ in $\Omega$. Consider an open set $\omega\subset\Omega$ and define $$\l_{1,w}(\omega)=\inf_{\phi\in H_0^1(\omega),\|\phi\|_{L^2}=1} \left(\int_{\omega}|\nabla\phi|^2-\int_{\omega}|\phi|^2e^{w}\right)\leq 0.$$ Then $\int_{\omega} e^{w}dy> 4\pi.$
\end{lem}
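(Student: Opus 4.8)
The plan is to argue by contradiction: assume $\int_{\omega}e^{w}\,dy\le 4\pi$ and deduce $\lambda_{1,w}(\omega)>0$, contradicting the hypothesis. The first observation is that $\lambda_{1,w}(\omega)\le 0$ is equivalent to the existence of some $\phi\in H_0^1(\omega)$, $\phi\ge 0$, $\phi\not\equiv 0$, with $\int_{\omega}|\nabla\phi|^2\,dy\le\int_{\omega}e^{w}\phi^2\,dy$ (take $\phi$ to be the first Dirichlet eigenfunction of $-\Delta-e^{w}$ on $\omega$, or an almost-minimizer on a smooth exhaustion if the infimum is not attained). The second, and decisive, ingredient is that the pointwise condition $\Delta w+e^{w}>0$ says exactly that the conformal metric $g=e^{w}(dy_1^2+dy_2^2)$ has Gauss curvature strictly less than $\tfrac12$, which is precisely the regime in which Bol's isoperimetric inequality compares $(\Omega,g)$ with the round sphere $S^2_{1/2}$ of constant curvature $\tfrac12$ and total area $8\pi$.

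Next I would carry out the equimeasurable symmetric rearrangement used in Lemma~3.1 of \cite{Lin1} and Proposition~3.2 of \cite{Gui}: let $\Sigma^{*}\subset S^2_{1/2}$ be the geodesic cap with $\mathrm{area}_{g_{1/2}}(\Sigma^{*})=\int_{\omega}e^{w}\,dy=:M\le 4\pi$, and let $\phi^{*}$ on $\Sigma^{*}$ be the radially decreasing rearrangement of $\phi$ normalized so that $\{\phi^{*}>t\}$ has $g_{1/2}$-area equal to $A(t):=\int_{\{\phi>t\}}e^{w}\,dy$ for every $t$. Equimeasurability gives $\int_{\Sigma^{*}}(\phi^{*})^2\,dV_{g_{1/2}}=\int_{\omega}e^{w}\phi^2\,dy$; while the coarea formula, the Cauchy--Schwarz inequality on the fibres $\{\phi=t\}$, and Bol's inequality in the form $L_{g}(\{\phi=t\})^2\ge\tfrac12\,A(t)\big(8\pi-A(t)\big)$ together yield the P\'olya--Szeg\H{o}-type bound $\int_{\omega}|\nabla\phi|^2\,dy\ge\int_{\Sigma^{*}}|\nabla_{g_{1/2}}\phi^{*}|^2\,dV_{g_{1/2}}$. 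Combining the two inequalities gives $\int_{\Sigma^{*}}|\nabla_{g_{1/2}}\phi^{*}|^2\,dV_{g_{1/2}}\le\int_{\Sigma^{*}}(\phi^{*})^2\,dV_{g_{1/2}}$, so the first Dirichlet eigenvalue $\mu_1$ of $-\Delta_{g_{1/2}}-1$ on the cap $\Sigma^{*}$ satisfies $\mu_1\le 0$.

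It then remains to note that on $S^2_{1/2}$ a hemisphere has area exactly $4\pi$ and that the first Dirichlet eigenvalue of $-\Delta_{g_{1/2}}$ on a hemisphere equals $1$ (realised by the restriction of a first-degree spherical harmonic, which vanishes on the equator), so $\mu_1\big(-\Delta_{g_{1/2}}-1;\ \text{hemisphere}\big)=0$; by strict domain monotonicity, any geodesic cap of area $M<4\pi$ has $\mu_1>0$. Hence $\mu_1(\Sigma^{*})\le 0$ forces $M=4\pi$ and equality throughout the chain above. But equality in the Bol/rearrangement step forces $g$ to have Gauss curvature identically $\tfrac12$, i.e. $\Delta w+e^{w}\equiv 0$, contradicting the strict inequality $\Delta w+e^{w}>0$ on $\overline{\Omega}$. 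This contradiction establishes $\int_{\omega}e^{w}\,dy>4\pi$.

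The step I expect to be the main obstacle is making the rearrangement fully rigorous for an \emph{arbitrary} open $\omega$: the super-level sets $\{\phi>t\}$ need be neither connected nor simply connected, so one must either pass to their simply connected pieces while keeping the area bookkeeping under control, or invoke the version of Bol's inequality valid for domains of arbitrary topology on a surface with an upper curvature bound, and then run the equality analysis with care --- this is exactly the technical content distilled in \cite{Lin1,Gui,Lin2}. An alternative route, matching the remark that Lemma~\ref{1.3} is a limiting case of the Sphere Covering Inequality, is to apply Lemma~\ref{1.2} with $w_1=w$ and $w_2=w+s\psi$, where $\psi>0$ is the subsolution of $-\Delta\psi=e^{w}\psi$ supplied by $\lambda_{1,w}(\omega)\le 0$, and let $s\to 0^{+}$: the bound $\int_{\omega}(e^{w_1}+e^{w_2})\,dy\ge 8\pi$ degenerates to $2\int_{\omega}e^{w}\,dy\ge 8\pi$, and the equality clause $f_1\equiv f_2\equiv 0$ of Lemma~\ref{1.2} upgrades this to the strict inequality; this variant, however, meets the same topological bookkeeping and requires justifying the degeneration carefully.
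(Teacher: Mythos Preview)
Your proposal is correct and follows exactly the approach the paper indicates: the paper does not give its own proof of this lemma but simply states that it ``is a consequence of Bol's inequality and the equimeasurable symmetric rearrangement'' with references to \cite{Lin1,Gui,Lin2}, and remarks that it ``can be regarded as a limiting case of the Sphere Covering Inequality.'' Your write-up expands precisely this argument (Bol's isoperimetric inequality for the conformal metric $e^{w}|dy|^2$ with curvature $<\tfrac12$, rearrangement onto a cap of $S^2_{1/2}$, and the eigenvalue comparison with the hemisphere), and you correctly flag the only genuine technicality --- the topology of the super-level sets --- which is handled in the cited references; your alternative route via a degeneration of Lemma~\ref{1.2} matches the paper's ``limiting case'' remark as well.
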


One of our motivation for proving Theorem 1 is to study the rigidity problem on Hawking mass for surfaces in three manifolds with nonnegative scalar curvature. The Hawking mass is a quasi-local mass in mathematical theory of general relativity. It has played a very important role in proving the Penrose Inequality in both \cite{Br} and \cite{HI}. In \cite{CY}. Christodoulou and Yau prove that for a stable $CMC$ sphere in nonnegative scalar curvature three manifold, the Hawking mass is nonnegative.

During his talk at ICM 2002 in Beijing, Bartnik proposed the rigidity problem on Hawking mass (see \cite[P.235]{Bar}).
In \cite{Sun}, the second author proves the following:
\begin{lem}\cite{Sun}
\label{lem:sun1}
Let (M,g) be a complete Riemnnian three manifold with scalar curvature $R(g)\geq0$ and $\Omega \subset M$ be a domain with boundary $\Sigma=\partial\Omega$.
We further assume that $\Sigma$ is a stable CMC sphere with $m_H(\Sigma)= 0$, where $m_H(\Sigma)$ denotes the Hawking mass of $\Sigma$. Then $\Omega$ is isometric to a Euclidean ball in $\mathbb{R}^3$
whenever the mean field equation
\begin{eqnarray}
\Delta u \,= \,6\,(1-e^u),~~~~{\rm on}~~S^2\nonumber
\end{eqnarray}
has only zero solution.
\end{lem}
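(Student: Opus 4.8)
The plan is to split the rigidity into a \emph{boundary} statement --- that $\Sigma$ must be a standard round sphere sitting inside $(M,g)$ exactly as a Euclidean sphere sits in $\mathbb{R}^3$ --- and an \emph{interior} statement that propagates flatness from $\Sigma$ into $\Omega$. Throughout, $\Sigma$ is a topological $2$-sphere, $H$ its constant mean curvature, $\nu$ the outward unit normal, $A$ the second fundamental form and $K_\Sigma$ the Gauss curvature of the induced metric $g_\Sigma$. First observe that, by the definition of the Hawking mass, $m_H(\Sigma)=0$ is equivalent to $\int_\Sigma H^2\,dA=16\pi$, and since $H$ is constant this reads $|\Sigma|\,H^2=16\pi$. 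The goal is then to show $(\Omega,g)$ is isometric to the closed ball of radius $2/H$ in $\mathbb{R}^3$.

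For the boundary step I would use the volume-preserving stability of $\Sigma$: $\int_\Sigma\big(|\nabla\phi|^2-(|A|^2+\mathrm{Ric}(\nu,\nu))\phi^2\big)\,dA\ge 0$ for all $\phi\in C^\infty(\Sigma)$ with $\int_\Sigma\phi\,dA=0$. By the Gauss equation and $R(g)\ge 0$, the potential $|A|^2+\mathrm{Ric}(\nu,\nu)$ can be expressed through $|A-\tfrac{H}{2}g_\Sigma|^2$, $H^2$, $R(g)$ and $K_\Sigma$; feeding this into the stability inequality and using Gauss--Bonnet $\int_\Sigma K_\Sigma\,dA=4\pi$ together with the normalization $|\Sigma|H^2=16\pi$, one obtains effective control on all the relevant integrals. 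Next I would uniformize, writing $g_\Sigma=e^{2w}g_0$ with $g_0$ the unit round metric on $S^2$, so that $K_\Sigma e^{2w}=1-\Delta_{g_0}w$. Substituting this conformal identity into the inequalities above and choosing test functions adapted to the density $e^{2w}$, one is led to the conclusion that $w$, after the normalization in which the constant function represents the round metric, must satisfy the mean field equation $\Delta u=6(1-e^u)$ on $S^2$. By the hypothesis of the lemma this gives $u\equiv 0$, so $g_\Sigma$ is round; tracing the equality cases backward one finds in addition that $\Sigma$ is totally umbilic with $A=\tfrac{H}{2}g_\Sigma$, that $R(g)\equiv 0$ on $\Sigma$, and that $|\Sigma|=4\pi(2/H)^2$ --- i.e.\ $\Sigma$ is a metric sphere of radius $2/H$ carrying exactly Euclidean boundary data.

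It remains to propagate flatness into $\Omega$. Since $\Sigma$ is a stable CMC sphere it sits in a local foliation by CMC spheres $\{\Sigma_t\}$ (Ye); Christodoulou--Yau \cite{CY} gives $m_H(\Sigma_t)\ge 0$ for every leaf, and combining this with the monotonicity of the Hawking mass along the foliation (or along inverse mean curvature flow) and $m_H(\Sigma_0)=0$ forces $m_H\equiv 0$ on the leaves. The equality case then spreads the boundary rigidity to each $\Sigma_t$: every leaf is a round, totally umbilic sphere with $R(g)=0$, so on the swept region $g$ is a warped product $dt^2+\rho(t)^2 g_{S^2}$ with $R(g)\equiv 0$, which forces $\rho$ to be affine in $t$ and hence $g$ to be flat; an outermost/exhaustion argument, using $\Sigma=\partial\Omega$ and completeness of $M$, shows the foliation sweeps out all of $\Omega$, whence $(\Omega,g)$ is a Euclidean ball. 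The main obstacle is precisely this last step: one has to rule out that the CMC foliation degenerates before it exhausts $\Omega$ and make the equality case of the monotonicity rigorous; an alternative is to attach an exterior Schwarzschild slice of mass zero along $\Sigma$ and invoke the rigidity case of the Riemannian positive mass theorem, at the cost of checking the asymptotic and umbilic matching conditions. A secondary subtlety, already in the boundary step, is that the conformal-factor equation comes out as $\Delta u=6(1-e^u)$ rather than the ``naive'' $\Delta u=2(1-e^u)$ produced by Gauss--Bonnet alone; getting the coefficient $6$ requires using the full three-dimensional stability inequality and the $m_H=0$ constraint simultaneously, not merely the intrinsic geometry of $\Sigma$.
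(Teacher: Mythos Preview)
The paper does not prove this lemma; it is quoted from \cite{Sun}. What the paper \emph{does} display, in the proof of Theorem~\ref{1.2}, is the two-step skeleton you have in mind: (i) a conformal map $\varphi:\Sigma\to S^2$ with $\int_\Sigma\varphi=0$ is produced, the induced metric is written as $\varphi^*(e^ug_0)$, and the stability/$m_H=0$ information forces $u$ to satisfy $\Delta u=6(1-e^u)$; (ii) once $u\equiv 0$ and $\Sigma$ is the round sphere with $H=2$, the interior conclusion is obtained by invoking the black-box rigidity result recorded as Lemma~\ref{sphere rigidity} (Miao / Shi--Tam / Hang--Wang): a compact $3$-manifold with $R\ge 0$ whose boundary is the unit round sphere with mean curvature $2$ is the Euclidean ball.

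Your boundary step is in the right spirit, though it would help to name the test functions explicitly: they are the balanced coordinate functions $x_i\circ\varphi$ (Hersch/Christodoulou--Yau), and the equality case of the summed stability inequality is what simultaneously forces $\mathring A=0$, $R(g)|_\Sigma=0$, and makes each $x_i$ a Jacobi eigenfunction, which is exactly how the coefficient $6$ (rather than $2$) appears.

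Where your proposal diverges is the interior step. Your primary route---a CMC foliation of $\Omega$ plus monotonicity of $m_H$ along it---is not what the paper (via \cite{Sun}) does, and as you yourself flag, it has real gaps: Ye only gives a \emph{local} foliation, there is no general Geroch-type monotonicity along CMC foliations (that is an IMCF statement), and there is no mechanism preventing the foliation from degenerating before exhausting $\Omega$. Your fallback---gluing a mass-zero Schwarzschild (i.e.\ Euclidean) exterior along $\Sigma$ and invoking the equality case of the positive mass theorem with corners---is correct, and is in fact precisely the content of Miao's theorem that the paper cites as Lemma~\ref{sphere rigidity}. So the honest fix is to replace your foliation argument by a direct appeal to that lemma; what you gain is a clean, already-proved statement in the literature, at the cost of the self-contained flavor you were aiming for.
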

This reduces the rigidity problem on Hawking mass to the uniqueness of zero solution of \eqref{eq:2} with $\a=\frac{1}{3}$. 
 In \cite{Sun}, the author solved this uniqueness problem locally, that is, zero is the only solution for \eqref{eq:2} with $\a=\frac{1}{3}$ which is sufficiently small.
As a consequence, the second author proved the rigidity of Hawking mass for nearly round surface. Now by using Lemma \ref{lem:sun1} and Theorem \ref{1.1}, we can have
\begin{thm}\label{1.2}
Let (M,g) be a complete Riemnnian three manifold with scalar curvature $R(g)\geq0$ and $\Omega \subset M$ be a domain with boundary $\Sigma=\partial\Omega$, if $\Sigma$ is a stable CMC sphere with even symmetry and $m_H(\Sigma)= 0$, then $\Omega$ is isometric to a Euclidean ball in $\mathbb{R}^3$. In particular, $\Sigma$ is isometric to the standard $\mathbb{S}^2$ in $\mathbb{R}^3$.
\end{thm}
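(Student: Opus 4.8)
The plan is to prove Theorem \ref{1.2} as an immediate corollary of Theorem \ref{1.1} and Lemma \ref{lem:sun1}, so the real content is showing how the hypotheses line up. First I would observe that Lemma \ref{lem:sun1} requires exactly one analytic input: that the mean field equation $\Delta u = 6(1-e^u)$ on $S^2$ has only the zero solution. Rewriting $6(1-e^u) = -6(e^u-1)$ and comparing with \eqref{eq:2}, this is precisely \eqref{eq:2} with $\frac{\a}{2} = \frac{1}{6}$, i.e. $\a = \frac{1}{3}$; the normalization $\int_{S^2} e^u\,d\omega = 1$ used to pass from \eqref{eq:1} to \eqref{eq:2} is harmless because the equation $\Delta u = 6(1-e^u)$ is itself not invariant under adding constants, so a solution of it is automatically a normalized solution in the sense of \eqref{eq:2}. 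Thus Theorem \ref{1.1} with $\a = \frac{1}{3}$ would give $u \equiv 0$ — but only among \emph{even} solutions, $u(x) = u(-x)$.

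This is exactly where the even-symmetry hypothesis on $\Sigma$ enters, and checking this compatibility is the one step that needs care rather than being purely formal. In the setup of \cite{Sun} leading to Lemma \ref{lem:sun1}, the function $u$ on $S^2$ arises from the conformal structure of the stable CMC sphere $\Sigma$ (via a conformal parametrization $\Sigma \cong (S^2, e^{2u} g_0)$ or the analogous construction in that paper). I would point out that when $\Sigma$ carries an even symmetry — i.e.\ is invariant under the antipodal-type involution coming from the ambient geometry — the induced conformal factor $u$ inherits the symmetry $u(x) = u(-x)$. Hence under the hypothesis of Theorem \ref{1.2}, any solution $u$ produced by the reduction in Lemma \ref{lem:sun1} is an even solution of \eqref{eq:2} with $\a = \frac{1}{3}$, and therefore must vanish by Theorem \ref{1.1}. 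Consequently the uniqueness hypothesis of Lemma \ref{lem:sun1} is verified in this symmetric class, and Lemma \ref{lem:sun1} then yields that $\Omega$ is isometric to a Euclidean ball in $\RR^3$.

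The final sentence of the theorem — that $\Sigma$ itself is isometric to the standard $\mathbb{S}^2 \subset \RR^3$ — follows since the boundary of a Euclidean ball is a round sphere, and the isometry $\Omega \cong B^3$ restricts to an isometry of boundaries $\Sigma \cong \partial B^3 = \mathbb{S}^2$. I expect the only genuine obstacle to be the second paragraph: one must make precise, in the notation of \cite{Sun}, how the geometric even symmetry of $\Sigma$ translates into the antipodal symmetry $u(x)=u(-x)$ of the associated mean field solution — this requires recalling the specific construction of $u$ in the proof of Lemma \ref{lem:sun1} and verifying that the reduction is equivariant with respect to the antipodal map. Everything else is bookkeeping: matching $\a = \frac{1}{3}$, confirming the normalization, and invoking Theorem \ref{1.1}.
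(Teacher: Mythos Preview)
Your overall strategy is exactly the paper's: go inside the construction from \cite{Sun} to obtain the conformal factor $u$ solving $\Delta u = 6(1-e^u)$, show that the even symmetry of $\Sigma$ forces $u(x)=u(-x)$, apply Theorem~\ref{1.1} with $\a=\tfrac13$ to get $u\equiv 0$, and then conclude (the paper finishes via Lemma~\ref{sphere rigidity} rather than invoking Lemma~\ref{lem:sun1} as a black box, but this is cosmetic).

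The one place where you underestimate the work is the passage from the geometric symmetry of $\Sigma$ to the antipodal symmetry of $u$. You describe it as ``verifying that the reduction is equivariant with respect to the antipodal map,'' but there is no antipodal map yet: the conformal parametrization $\varphi:\Sigma\to S^2$ is only well-defined up to M\"obius transformations, and the isometry $\rho$ of $\Sigma$ transports to a map $\widetilde\rho=\varphi\circ\rho\circ\varphi^{-1}$ of $S^2$ that is a priori only \emph{conformal}, not an isometry. The paper's argument here is not bookkeeping: writing $\widetilde\rho^*g_0=e^vg_0$ one gets $u=u\circ\widetilde\rho+v$ with $\Delta v=2(1-e^v)$; plugging back into $\Delta u=6(1-e^u)$ and comparing forces $e^v\equiv 1$, so $\widetilde\rho$ is an honest isometry of the round sphere. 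Only then does $\widetilde\rho(x)=Ax$ with $A\in O(3)$, $A^2=I$, and the fixed-point-free hypothesis on $\rho$ rules out the eigenvalue $+1$, giving $A=-I$ and hence $u(x)=u(-x)$. You should spell out this step rather than treat it as routine equivariance.
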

Here , by even symmetry of $\Sigma$, we mean that there exists an isometry $ \rho:\Sigma\to\Sigma$ satisfying: $ \rho^2=id$ and $ \rho(x)\neq x$ for $x\in\Sigma$. There is also a hyperbolic version of the rigidity of Hawking mass:
\begin{thm}\label{1.3}
Let (M,g) be a complete Riemnnian three manifold with scalar curvature $R(g)\geq-6$ and $\Omega \subset M$ be a domain with boundary $\Sigma=\partial\Omega$, if $\Sigma$ is a stable CMC sphere with even symmetry and $m_H(\Sigma)= 0$, then $\Omega$ isometric to a hyperbolic ball in $\mathbb{H}^3$.
\end{thm}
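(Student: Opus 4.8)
The plan is to repeat the proof of Theorem \ref{1.2} with $\mathbb{R}^3$ replaced throughout by $\mathbb{H}^3$ (normalized so that its sectional curvature is $-1$, hence $R\equiv-6$); the only genuinely new ingredient is a hyperbolic analogue of Lemma \ref{lem:sun1}, namely: \emph{if $(M,g)$ is complete with $R(g)\geq-6$, if $\Sigma=\partial\Omega$ is a stable CMC sphere whose hyperbolic Hawking mass $m_H(\Sigma)=\sqrt{|\Sigma|/16\pi}\,\bigl(1-\frac{1}{16\pi}\int_\Sigma(H^2-4)\,d\sigma\bigr)$ vanishes, and if the equation $\Delta u=6(1-e^u)$ on $S^2$ has only the zero solution, then $\Omega$ is isometric to a geodesic ball in $\mathbb{H}^3$.} Granting this analogue, Theorem \ref{1.3} follows at once: by Theorem \ref{1.1} with $\alpha=\frac{1}{3}$ every even solution of $\Delta u=6(1-e^u)$ vanishes, and since $\Sigma$ carries a fixed-point-free isometric involution $\rho$, one may choose the uniformizing diffeomorphism $\Sigma\to S^2$ so that it conjugates $\rho$ to the antipodal map (a M\"obius transformation brings the induced fixed-point-free conformal involution of $S^2$ to the antipodal map, which is a round isometry); hence the conformal factor of the induced metric of $\Sigma$ is even, therefore identically zero, so $\Sigma$ is round and the hyperbolic analogue of Lemma \ref{lem:sun1} yields the conclusion.

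To prove that analogue I would follow the argument of \cite{Sun} line by line after one observation. The CMC stability inequality reads $\int_\Sigma\bigl(|\nabla f|^2-(\operatorname{Ric}(\nu,\nu)+|A|^2)f^2\bigr)\,d\sigma\geq0$ for all $f$ with $\int_\Sigma f\,d\sigma=0$. Combining the Gauss equation, the pointwise bound $|A|^2\geq\frac{1}{2}H^2$, and $R(g)\geq-6$ yields $\operatorname{Ric}(\nu,\nu)+|A|^2\geq-3-K_\Sigma+\frac{3}{4}H^2$ on $\Sigma$, while $m_H(\Sigma)=0$ together with $H\equiv\mathrm{const}$ forces
\[
H^2=\frac{16\pi}{|\Sigma|}+4,\qquad\text{so that}\qquad -3+\frac{3}{4}H^2=\frac{12\pi}{|\Sigma|}.
\]
Therefore $\operatorname{Ric}(\nu,\nu)+|A|^2\geq\frac{12\pi}{|\Sigma|}-K_\Sigma$, which is \emph{exactly} the inequality used in the Euclidean case: the extra $-3$ coming from $R(g)\geq-6$ is cancelled by the $+3$ coming from the hyperbolic correction term $H^2-4$ in $m_H$. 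From this point on, the Sphere Covering Inequality, the nodal-set analysis and the remaining estimates of \cite{Sun} apply verbatim, producing (after the usual normalization) a solution of $\Delta u=6(1-e^u)$ on $S^2$ as the conformal factor of $g_\Sigma$; and once $\Sigma$ is known to be round, the equality discussion---umbilicity of $\Sigma$, $R(g)\equiv-6$ and $\operatorname{Ric}(\nu,\nu)\equiv-2$ along $\Sigma$, propagated into $\Omega$ by a monotonicity argument for the hyperbolic Hawking mass---forces $\Omega$ to be a geodesic ball in $\mathbb{H}^3$.

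The main obstacle is this last step, i.e.\ upgrading the rigidity of the surface $\Sigma$ to the rigidity of the filling $\Omega$: the stability inequality by itself only gives that $\Sigma$ is totally umbilic with $\operatorname{Ric}(\nu,\nu)=-2$ and $R(g)=-6$ along $\Sigma$, so one genuinely needs a monotonicity-type argument for the hyperbolic Hawking mass---valid in $3$-manifolds with $R\geq-6$, in analogy with Geroch's monotonicity under (weak) inverse mean curvature flow---together with the analysis of the CMC foliation issuing from $\Sigma$, exactly as in \cite{Sun}, to transport the rigidity into $\Omega$. A secondary point requiring care is to fix from the start the normalization of the hyperbolic Hawking mass so that it both vanishes on the geodesic spheres of $\mathbb{H}^3$ and yields the exact cancellation above; once that is done, all remaining computations are identical to those in the proof of Theorem \ref{1.2}.
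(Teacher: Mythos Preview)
Your overall strategy is correct and matches the paper's: reduce to the mean field equation via the Christodoulou--Yau/Sun stability argument (your cancellation $-3+\tfrac{3}{4}H^2=\tfrac{12\pi}{|\Sigma|}$ is exactly right, so the hyperbolic case produces the \emph{same} equation $\Delta u=6(1-e^u)$), then invoke Theorem \ref{1.1} for even $u$, then upgrade to rigidity of $\Omega$.

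There is, however, a real gap in your passage from ``$\Sigma$ carries a fixed-point-free isometric involution $\rho$'' to ``the conformal factor is even''. You propose to compose the uniformizing map with a M\"obius transformation $\psi$ so that $\rho$ is conjugated to the antipodal map. That conjugating $\psi$ exists, but it is \emph{not} in general a round isometry, and the conformal factor changes: if $u$ is the factor for the balanced map and $u'$ the factor after composing with $\psi$, a direct computation (writing $(\psi^{-1})^*g_0=e^wg_0$, so $\Delta w=2(1-e^w)$) gives
\[
\Delta u'=2+4e^w-6e^{u'},
\]
which is $6(1-e^{u'})$ only when $e^w\equiv1$, i.e.\ when $\psi$ is already an isometry. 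So after your change of gauge $u'$ is even but no longer solves the mean field equation, and Theorem \ref{1.1} does not apply. The paper closes this gap the other way around: it keeps the balanced map $\varphi$ (so that $u$ genuinely solves $\Delta u=6(1-e^u)$), sets $\widetilde{\rho}=\varphi\circ\rho\circ\varphi^{-1}$ a priori only conformal, and then \emph{proves} $\widetilde{\rho}$ is a round isometry by exactly the calculation above run in reverse, forcing $e^w\equiv1$. Since $\widetilde{\rho}^2=\mathrm{id}$ and $\widetilde{\rho}$ is fixed-point-free, the corresponding $A\in O(3)$ has no eigenvalue $+1$, hence $A=-I_3$ and $u(-x)=u(x)$.

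For the filling step you flag as the main obstacle, the paper does not use a monotonicity/IMCF argument at all: once $\Sigma$ is shown to be the round sphere with $|\Sigma|=4\pi$ and $H=2\sqrt{2}$, it invokes the Shi--Tam rigidity result (Lemma \ref{hyperbolic sphere rigidity}): a compact $3$-manifold with $R\geq -6$ whose boundary is isometric to the unit round sphere with $H=2\sqrt{2}$ is the unit ball in $\mathbb{H}^3$. This replaces your proposed CMC-foliation/monotonicity machinery by a single black-box lemma.
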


We can also get the flatness for isoperimetric surface of sphere type:
\begin{thm}\label{1.4}
Let (M,g) be an asymtotically flat three manifold with scalar curvature $R(g)\geq0$.
If there exists an isoperimetric sphere $\Sigma$ with even symmetry and $m_H(\Sigma)\\= 0$, then (M,g) is isometric to $(\mathbb{R}^3, \delta) $, where $\delta$ denotes the flat Euclidean metric.
\end{thm}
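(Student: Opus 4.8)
The plan is to reduce Theorem~\ref{1.4} to the rigidity of stable CMC spheres already established in Theorem~\ref{1.2}, and then to promote the resulting \emph{interior} rigidity to global rigidity of $(M,g)$ using the positive mass theorem together with the equality analysis of the isoperimetric inequality on asymptotically flat manifolds with nonnegative scalar curvature. The first step is a standard observation: an isoperimetric sphere $\Sigma$ is automatically a stable CMC sphere. Indeed, by the regularity theory for isoperimetric boundaries in dimension three, $\Sigma$ is a smooth embedded surface of constant mean curvature, and since it minimizes area among all competitors enclosing the same volume, the second variation of area in the volume-preserving class is nonnegative, which is exactly the stability condition. With the even-symmetry hypothesis, $\Sigma$ then satisfies all the assumptions of Theorem~\ref{1.2}, so the region $\Omega$ bounded by $\Sigma$ is isometric to a Euclidean ball $B_{r_0}\subset\RR^3$; in particular $\Sigma$ is a round sphere of radius $r_0$, with $|\Sigma|=4\pi r_0^2$, $\mathrm{vol}(\Omega)=\tfrac{4}{3}\pi r_0^3=\tfrac{1}{6\sqrt{\pi}}|\Sigma|^{3/2}$, and constant mean curvature $2/r_0$.

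It remains to pass from ``$\Omega$ is a flat ball'' to ``$(M,g)\cong(\RR^3,\delta)$''. The crucial extra ingredient is that $\Sigma$ is not merely CMC but \emph{isoperimetric}, which is what ties the geometry of $\Omega$ to the asymptotic end: the identity $\mathrm{vol}(\Omega)=\tfrac{1}{6\sqrt{\pi}}|\Sigma|^{3/2}$ says precisely that $\Omega$ realizes the Euclidean value of the isoperimetric profile of $(M,g)$ at the volume $V_0=\mathrm{vol}(\Omega)$, i.e. $(M,g)$ attains equality at $V_0$ in the isoperimetric inequality it satisfies as an asymptotically flat manifold with $R(g)\ge 0$. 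I would then invoke the rigidity clause of that sharp isoperimetric inequality — an isoperimetric region isometric to a Euclidean ball can occur only when the ADM mass vanishes — and conclude, by the rigidity case of the positive mass theorem, that $(M,g)$ is isometric to $(\RR^3,\delta)$; in particular $\Sigma$ is the standard $\mathbb{S}^2\subset\RR^3$.

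The main obstacle is precisely this last step. Theorem~\ref{1.2} controls only the inside of $\Sigma$, and because $R(g)\ge 0$ is merely a differential inequality there is no unique-continuation mechanism that would automatically flatten the exterior $M\setminus\Omega$; one genuinely has to use the global minimizing property of $\Sigma$. A tempting shortcut — running weak inverse mean curvature flow out of $\Sigma$ and using Geroch monotonicity of the Hawking mass with $m_H(\Sigma)=0$ — only yields $m_{ADM}(M)\ge 0$, which is already known from the positive mass theorem, so the isoperimetric hypothesis cannot be dropped. Making rigorous the assertion that the asymptotic end carries no mass beyond what is accounted for by a flat ball — whether through the equality analysis of the asymptotically flat isoperimetric inequality, or through a direct monotonicity argument along an exhaustion of $M$ by isoperimetric regions — is where the real content of Theorem~\ref{1.4} sits; the even-symmetry assumption, by contrast, is used only in the appeal to Theorem~\ref{1.2}.
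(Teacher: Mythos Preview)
Your proposal is correct and follows the same route as the paper: reduce to Theorem~\ref{1.2} via the observation that an isoperimetric sphere is a stable CMC sphere, deduce that the enclosed region is a Euclidean ball, and then invoke the rigidity case of the sharp isoperimetric inequality on asymptotically flat manifolds with $R(g)\ge 0$. The only difference is one of emphasis: you present the last step as the ``main obstacle'' requiring analysis, whereas in the paper this step is a direct citation of an existing result (Lemma~\ref{shi rigidity}, due to Shi), which states that $I(V)\le (36\pi)^{1/3}V^{2/3}$ with equality for some $V_0$ if and only if $(M,g)\cong(\RR^3,\delta)$; no separate appeal to the positive mass theorem or to inverse mean curvature flow is needed in the proof itself.
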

There is also a hyperbolic version of this theorem:
\begin{thm}\label{1.5}
Let (M,g) be an asymptotically hyperbolic three manifold with scalar curvature $R(g)\geq-6$.
If there exists an isoperimetric sphere $\Sigma$ with even symmetry and $m_H(\Sigma)= 0$, then (M,g) is isometric to the hyperbolic space form $(\mathbb{H}^3, g_{\mathbb{H}}) $.
\end{thm}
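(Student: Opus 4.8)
The plan is to reduce the statement to the constant mean curvature rigidity of Theorem~\ref{1.3}, and then to upgrade the resulting local rigidity to a global one by means of the isoperimetric foliation and the positive mass theorem.

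\emph{Step 1: reduction to Theorem~\ref{1.3}.} An isoperimetric sphere is automatically a stable constant mean curvature surface: as a minimizer of area under a volume constraint it is a critical point, hence CMC, and the second variation is nonnegative on volume-preserving normal perturbations, which is exactly stability. Since $\Sigma$ is assumed to carry an even ($\mathbb{Z}/2$) symmetry and $m_H(\Sigma)=0$, the hypotheses of Theorem~\ref{1.3} are satisfied, so the domain $\Omega$ bounded by $\Sigma$ is isometric to a geodesic ball $B_\rho\subset\mathbb{H}^3$. In particular $\Sigma$ is a round geodesic sphere, $|\Sigma|$ and $H_\Sigma$ take their $\mathbb{H}^3$ values, and $\Omega$ is a piece of $\mathbb{H}^3$.

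\emph{Step 2: globalization.} Write $V_0=|\Omega|$ and let $\{\Sigma_V\}$ be the boundaries of isoperimetric regions $\Omega_V$, so that $\Sigma_{V_0}=\Sigma$ and, near $V_0$, $\{\Sigma_V\}$ is a smooth foliation of a collar of $\Sigma$. For $V<V_0$ close to $V_0$ the leaf $\Sigma_V$ lies inside $\Omega\cong B_\rho$, so by the sharp isoperimetric inequality of $\mathbb{H}^3$ (for which geodesic balls are the unique extremizers) $\Omega_V$ is itself a geodesic ball and $m_H(\Sigma_V)=0$; hence $m_H\equiv 0$ on some interval $[V_0-\delta,V_0]$. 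For $V\ge V_0$ one combines three ingredients: (a) the monotonicity of the hyperbolic Hawking mass along the isoperimetric foliation, which follows from the stability inequality, the Gauss--Bonnet theorem and $R(g)\ge -6$, together with its rigidity, namely that constancy of $m_H$ over an interval of volumes forces the swept region to be foliated by totally umbilic round spheres and thus to be a piece of the rotationally symmetric anti-de~Sitter--Schwarzschild model; (b) the existence, uniqueness and asymptotic roundness of large isoperimetric regions in asymptotically hyperbolic manifolds, producing a foliation of the end along which $m_H(\Sigma_V)\to m_{ADM}$ (the asymptotically hyperbolic analogue of the asymptotically flat results of Huisken and of Eichmair--Metzger); and (c) the positive mass theorem for asymptotically hyperbolic manifolds (Wang; Chru\'{s}ciel--Herzlich), which gives $m_{ADM}\ge 0$ with equality only for $(\mathbb{H}^3,g_{\mathbb{H}})$. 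One then aims to propagate the vanishing of $m_H$ from $[V_0-\delta,V_0]$ outward along the foliation, using the rigidity in (a); granting this, $m_H\equiv 0$ all the way to the end, so $m_{ADM}=0$, and the rigidity of the positive mass theorem identifies $M\setminus\overline{\Omega}$ with $\mathbb{H}^3\setminus\overline{B_\rho}$; gluing back $\Omega\cong B_\rho$ gives $(M,g)\cong(\mathbb{H}^3,g_{\mathbb{H}})$.

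\emph{The main obstacle} is the continuation step inside Step~2: one must show that the set of volumes $V$ for which $\Omega_V$ is a model geodesic ball (equivalently, for which $m_H(\Sigma_V)=0$) is \emph{open} along the foliation and not merely closed, which requires the full force of the rigidity case of the Hawking-mass monotonicity, since the monotonicity itself is only one-sided; and one must know that the isoperimetric regions sweep out $M$ continuously from $\Omega$ to the asymptotic end rather than escaping to infinity at intermediate volumes, which is where the asymptotically hyperbolic structure and the known uniqueness results for large isoperimetric regions enter. The positive mass theorem and its rigidity are used as black boxes, and the even-symmetry hypothesis is needed only in Step~1 (after which each model leaf inherits the antipodal symmetry automatically). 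Theorem~\ref{1.4} is proved identically, with $R(g)\ge -6$ replaced by $R(g)\ge 0$, $\mathbb{H}^3$ by $\mathbb{R}^3$, and the asymptotically hyperbolic positive mass theorem by the positive mass theorem of Schoen--Yau and Witten.
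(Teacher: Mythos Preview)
Your Step~1 is correct and matches the paper exactly: an isoperimetric sphere is a stable CMC sphere, so Theorem~\ref{1.3} applies and the enclosed region $\Omega$ is isometric to a geodesic ball in $\mathbb{H}^3$.

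Your Step~2, however, is far more elaborate than what is needed, and you yourself flag its gaps (the openness of the ``model'' set of volumes along the foliation, and the continuous sweep-out of $M$ by isoperimetric regions). The paper bypasses all of this with a single stroke: once $\Omega$ is known to be isometric to a hyperbolic ball of volume $V_0$, its boundary area is exactly $I_{\mathbb{H}}(V_0)$, so the isoperimetric profile of $(M,g)$ satisfies $I(V_0)=I_{\mathbb{H}}(V_0)$. The rigidity statement in Lemma~\ref{hyperbolic shi rigidity} (the result of Ji--Shi--Zhu) then immediately forces $(M,g)\cong(\mathbb{H}^3,g_{\mathbb{H}})$. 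No foliation, no Hawking-mass monotonicity, no positive mass theorem is invoked directly; those ingredients are already packaged inside the isoperimetric-profile rigidity lemma, which is used as a black box.

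So the missing idea in your proposal is simply that equality $I(V_0)=I_{\mathbb{H}}(V_0)$ at a \emph{single} volume already suffices for global rigidity, by the cited lemma. Your route via propagation along the foliation might be made to work, but it reproves content that is already available, and as written it is only a sketch with acknowledged obstacles rather than a proof.
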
\begin{rem}
Using Theorem \ref{1.1} and spectral gap we can prove Lemma \ref{3.3} in section 4, which improves the local uniqueness result in \cite{Sun}
 to a more precise form. By Lemma \ref{3.3} we know that the even symmetry assumption can be replaced by the Gaussian curvature bound
 $K_{\Sigma}{\dfrac{|\Sigma|}{4\pi}}\leq2,$  in Theorem
 \ref{1.2}-\ref{1.5}.\end{rem}

\section{Preliminaries}
We give some basic notations to present our result. Let $\Sigma \subset (M,g)$ be a surface with unit normal vector field $n$, second fundamental form $A$ and mean curvature $H$.
\begin{defn}
The Willmore functional of $\Sigma$ is defined by:
\begin{eqnarray}
W(\Sigma)=\frac{1}{4}\int_{\Sigma}{H^2}.
\end{eqnarray}
when $R(g)\geq0$.
\begin{eqnarray}
W(\Sigma)=\frac{1}{4}\int_{\Sigma}{(H^2-4)}.
\end{eqnarray}
when $R(g)\geq-6$.
\end{defn}
Willmore functional appears naturally in general relativity in form of Hawking mass of a surface:
\begin{defn}
The Hawking mass of $\Sigma$ is defined by:
\begin{eqnarray}
m_H(\Sigma)=\frac{{|\Sigma|^{\frac{1}{2}}}}{(16\pi)^{\frac{3}{2}}}(16\pi-\int_{\Sigma}{H^2}).
\end{eqnarray}
when $R(g)\geq0$.
\begin{eqnarray}
m_H(\Sigma)=\frac{{|\Sigma|^{\frac{1}{2}}}}{(16\pi)^{\frac{3}{2}}}(16\pi-\int_{\Sigma}(H^2-4)).
\end{eqnarray}
when $R(g)\geq-6$.
\end{defn}
\begin{defn}
If $H$ is constant along $\Sigma$, we say $\Sigma$ is a CMC surface;
\\The Jacobi operator of a CMC surface $\Sigma$ is the second variation of area:
\begin{eqnarray}
L_{\Sigma}=-\Delta_{\Sigma}-(|A|^2+Ric(n,n))
\end{eqnarray}

A CMC surface $\Sigma$ is $stable$ if the first eigenvalue of $L_{\Sigma}$ on mean zero functions is nonnegative
\begin{eqnarray}
\Lambda_1(L_{\Sigma})=inf\{\int_{\Sigma}fL_{\Sigma}f: \int_{\Sigma}f=0, \int_{\Sigma}f^2=1\}\geq 0
\end{eqnarray}
i.e. it satisfies the following stability condition:
\begin{eqnarray}
\int_{\Sigma}{(|A|^2+Ric(n,n))f^2}\leq \int_{\Sigma}{|\nabla f|^2}
\end{eqnarray}
for all $f\in C_c^\infty(\Sigma)$ and $\int_{\Sigma}f=0$.
\end{defn}
%
We also want to study the isoperimetric surface in $AF(resp. AH)$ three manifold, we will always use the bracket to denote asymptotic hyperbolic case after related asymptotic flat situations.
\begin{defn}
A complete connected three manifold (M,g) is called AF(resp. AH), if there exists a constant $C> 0$, a compact set $K$, such that $M\setminus K$ is diffeomorphic to $\mathbb{R}^3\setminus B_R(0)$ for some $R>0$, and in standard coordinate the metric g has the following properties:
\begin{eqnarray}
g=\delta+h(resp. g=g_{\mathbb{H}}+h)
\end{eqnarray}
and
\begin{eqnarray}
|h_{ij}|+r|\partial h_{ij}|+r^2|\partial^2 h_{ij}|\leq Cr^{-\tau}
\end{eqnarray}
$\tau\in (\frac{1}{2},1](resp. \tau=3)$, where $r$ and $\partial$ denote the Euclidean distance and standard derivative operator on $\mathbb{R}^3$ respectively. The region $M\setminus K$ is called the end of M.
\\The standard hyperbolic space $(\mathbb{H}^3, g_{\mathbb{H}}) $ is
\begin{eqnarray}
g_{\mathbb{H}}=\frac{1}{1+r^2}dr^2+r^2g_{\mathbb{S}^2}
\end{eqnarray}
\end{defn}
We also need the following definition of isoperimetric surface.
\begin{defn}
Given a complete Riemannian 3-manifold (M,g), its isoperimetric profile with volume $V$ is defined as
\begin{eqnarray}
 \quad\quad\quad I(V) = inf\{ \mathcal{H}^2(\partial^* \Omega) : \Omega \subset M \ is \ a\ Borel\ set\ with \\ \nonumber\ finite \ perimeter\ and\ \mathcal{H}_g^3(\Omega)=V\}.
\end{eqnarray}
\end{defn}
Where, $\mathcal{H}^2$ is a 2-dim Hausdorff measure for the reduced boundary of $\Omega$ denoted by $\partial^* \Omega$. A Borel set $\Omega \subset M$ of finite perimeter such that $\mathcal{H}_g^3(\Omega)=V$ and $I(V)=\mathcal{H}^2(\partial^* \Omega)$ is called an isoperimetric region of $(M, g)$ of volume V. The surface $\partial \Omega$ is called isoperimetric surface.

\section{Integrability conditions for \eqref{eq:2}}
Assume $\a>0$ and let $\l=2/\a,$ then \eqref{eq:2} is equivalent to
\begin{eqnarray}\label{lambda equation}
\Delta u = \lambda(1-e^u)\ \ \text{on}\ \ \ S^2.
\end{eqnarray}
The mean field equation has been studied in various aspects, 
such as prescribed Gaussian curvature \cite{KW}, mean field model, Chern Simons Higgs model. This kind of equation may have bifurcation when approach $\lambda= 2k, k\in N$, so it may lose compactness. Ding, Jost, Li, Wang \cite{DJLW}\cite{DJLW1} have studied the equation at the first eigenvalue, Li \cite{Li} has initiated study of the existence of solutions by computing the Leray-Schauder topological degree, Lin computed the degree on $S^2$ in \cite{Lin} and surface of any genus \cite{CL}.

We have some integrability conditions for this equation from Kazdan-Warner \cite{KW}, this can help us to understand equation (\ref{lambda equation}). On any two dimensional manifold $M$, we have this equality:

\begin{lem}
For any smooth functions $u$ and $F$ on $M$, we have
\begin{eqnarray}\label{int condition}
2\Delta u(\nabla F\cdot \nabla u)=\nabla(2(\nabla F\cdot \nabla u)\nabla u-|\nabla u|^2\nabla F)\nonumber\\
-(2\nabla^2 F-(\Delta F)g)(\nabla u,\nabla u),
\end{eqnarray}
where $g$ is the Riemannian metric on $M$, $\nabla^2 F$ is the Hessian of $F$.
\end{lem}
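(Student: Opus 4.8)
The plan is to prove \eqref{int condition} as a pointwise identity, reading its right-hand side literally as $\operatorname{div} X-(2\nabla^2 F-(\Delta F)g)(\nabla u,\nabla u)$ for the vector field
$X:=2(\nabla F\cdot\nabla u)\,\nabla u-|\nabla u|^2\,\nabla F$, and simply expanding $\operatorname{div} X$. First I would apply the product rule, working at a fixed point in geodesic normal coordinates so that covariant derivatives reduce to ordinary ones: writing $\nabla F\cdot\nabla u$ for $\langle\nabla F,\nabla u\rangle$, one gets $\operatorname{div}\big(2\langle\nabla F,\nabla u\rangle\nabla u\big)=2\big\langle\nabla\langle\nabla F,\nabla u\rangle,\nabla u\big\rangle+2\langle\nabla F,\nabla u\rangle\,\Delta u$ and $\operatorname{div}\big(|\nabla u|^2\nabla F\big)=\big\langle\nabla|\nabla u|^2,\nabla F\big\rangle+|\nabla u|^2\,\Delta F$.

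Next I would expand the two inner-product terms using only the symmetry of the Hessian: $\big\langle\nabla\langle\nabla F,\nabla u\rangle,\nabla u\big\rangle=\nabla^2 F(\nabla u,\nabla u)+\nabla^2 u(\nabla F,\nabla u)$ and $\big\langle\nabla|\nabla u|^2,\nabla F\big\rangle=2\,\nabla^2 u(\nabla F,\nabla u)$. Subtracting, the two copies of $\nabla^2 u(\nabla F,\nabla u)$ cancel, leaving $\operatorname{div} X=2\,\nabla^2 F(\nabla u,\nabla u)+2\langle\nabla F,\nabla u\rangle\,\Delta u-|\nabla u|^2\,\Delta F$, and rearranging this is precisely \eqref{int condition}.

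This is an elementary computation with essentially no obstacle; the one point worth flagging is that every term is built from second covariant derivatives and no commutator $[\nabla_i,\nabla_j]$ is ever invoked, so, unlike in the Bochner formula, no curvature term appears. I would also remark that the identity holds on a Riemannian manifold of arbitrary dimension: the hypothesis $\dim M=2$ plays no role in \eqref{int condition} itself and is relevant only downstream, where it makes $2\nabla^2 F-(\Delta F)g$ trace-free — twice the traceless Hessian of $F$ — which is the form in which the identity is applied (together with integration over closed $M$, which annihilates the divergence term) to extract Kazdan–Warner-type obstructions by choosing $F$ to be a first eigenfunction of $\Delta$ on $S^2$.
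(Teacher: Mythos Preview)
Your proof is correct and follows essentially the same route as the paper: both arguments expand the divergence of $X=2(\nabla F\cdot\nabla u)\nabla u-|\nabla u|^2\nabla F$ via the product rule, use $\nabla\langle\nabla F,\nabla u\rangle=\nabla^2F(\cdot,\nabla u)+\nabla^2u(\cdot,\nabla F)$ and $\nabla|\nabla u|^2=2\nabla^2u(\cdot,\nabla u)$, and observe that the $\nabla^2u$ contributions cancel. The only cosmetic difference is that the paper first subtracts off the $2(\nabla F\cdot\nabla u)\Delta u$ term to reduce to an equivalent identity before expanding, whereas you compute $\operatorname{div}X$ in one pass; your added remarks on dimension and the absence of curvature terms are correct and not in the paper.
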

\begin{proof}
We only need to prove
\begin{eqnarray}
2\nabla(\nabla F\cdot \nabla u)\nabla u=\nabla(|\nabla u|^2\nabla F)+(2\nabla^2 F-(\Delta F)g)(\nabla u,\nabla u).
\end{eqnarray}
In fact,
\begin{eqnarray}
2\nabla(\nabla F\cdot \nabla u)\nabla u &=& 2\nabla^2 F(\nabla u,\nabla u)+2\nabla^2 u(\nabla u,\nabla F)\nonumber\\
&=&2\nabla^2 F(\nabla u,\nabla u)+\nabla(|\nabla u|^2\nabla F)-(\Delta F)|\nabla u|^2.
\end{eqnarray}
\end{proof}
Now we consider the equation on standard $S^2$, take $F$ as the first order spherical harmonics on $S^2$, then we have
\begin{lem}\label{4.2}
If $u$ satisfies
\begin{eqnarray}
\Delta u = \lambda(1-e^u)\nonumber
\end{eqnarray}
on standard $S^2$, then
\begin{eqnarray}\label{zero center}
(\lambda-2)\int_{S^2}{x_ie^u}=0.
\end{eqnarray}
\end{lem}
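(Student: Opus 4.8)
The plan is to specialize the Kazdan--Warner identity \eqref{int condition} of the previous lemma to $M=S^2$ with $F$ equal to one of the coordinate functions $x_i$ restricted to the unit sphere, and then integrate over $S^2$.

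First I would record the two elementary facts about first-order spherical harmonics that make this choice work. On the standard $S^2$ the function $x_i$ is a first eigenfunction of the Laplace--Beltrami operator, so $\Delta x_i=-2x_i$; moreover, since $S^2\subset\RR^3$ is totally umbilic with second fundamental form equal to the induced metric, the intrinsic Hessian of the linear function $x_i$ is $\nabla^2 x_i=-x_i g_0$. Hence
\[
2\nabla^2 F-(\Delta F)g_0 \;=\; -2x_i g_0 + 2x_i g_0 \;=\; 0,
\]
so the quadratic (Hessian) term in \eqref{int condition} vanishes identically for this choice of $F$.

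Next I would integrate \eqref{int condition} over $S^2$. The divergence term integrates to zero because $S^2$ is closed, and the Hessian term is zero by the previous step, so
\[
\int_{S^2} 2\,\Delta u\,(\nabla F\cdot\nabla u)\,d\omega \;=\; 0 .
\]
Plugging in $\Delta u=\lambda(1-e^u)$ from \eqref{lambda equation} reduces this to $\lambda\int_{S^2}(1-e^u)(\nabla F\cdot\nabla u)\,d\omega=0$, and it remains to evaluate the last integral by two integrations by parts. On one hand $\int_{S^2}\nabla F\cdot\nabla u\,d\omega = -\int_{S^2}F\,\Delta u\,d\omega = -\lambda\int_{S^2}x_i\,d\omega+\lambda\int_{S^2}x_ie^u\,d\omega = \lambda\int_{S^2}x_ie^u\,d\omega$, using $\int_{S^2}x_i\,d\omega=0$; on the other hand $\int_{S^2}e^u(\nabla F\cdot\nabla u)\,d\omega = \int_{S^2}\nabla F\cdot\nabla(e^u)\,d\omega = -\int_{S^2}(\Delta F)\,e^u\,d\omega = 2\int_{S^2}x_ie^u\,d\omega$. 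Subtracting gives $(\lambda-2)\int_{S^2}x_ie^u\,d\omega=0$, which is \eqref{zero center}.

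Since each step is a direct computation, there is no genuine difficulty here; the only point that needs care is the identity $\nabla^2 x_i=-x_i g_0$ --- equivalently, the fact that the coordinate functions realize equality in Obata's theorem --- because this is precisely what forces the Hessian term of \eqref{int condition} to drop out, leaving an integral identity that couples $\Delta u$ and $\nabla F\cdot\nabla u$ with no remainder.
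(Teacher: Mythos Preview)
Your proof is correct and follows essentially the same route as the paper's: specialize the Kazdan--Warner identity \eqref{int condition} to $F=x_i$, use $\nabla^2 x_i=-x_i g_0$ and $\Delta x_i=-2x_i$ to kill the Hessian term, integrate, and then integrate by parts on each of the two pieces of $\int_{S^2}(1-e^u)(\nabla x_i\cdot\nabla u)$. The only (harmless) omission is that you silently divide by $\lambda$ when passing from $\lambda\int_{S^2}(1-e^u)(\nabla F\cdot\nabla u)=0$ to the subtraction step; the paper does the same, and in the regime of interest $\lambda>0$.
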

\begin{proof}
Take $F=x_i,i=1,2,3$ in formula (\ref{int condition}), then we have

\begin{eqnarray}
-(\Delta F)=2F, \quad 2\nabla^2 F=(\Delta F)g.
\end{eqnarray}
Integrating (\ref{int condition}) on $S^2$,
\begin{eqnarray}
0=\int_{S^2}{\Delta u(\nabla x_i\cdot \nabla u)}=\lambda \int_{S^2}{(1-e^u)(\nabla x_i\cdot \nabla u)}.
\end{eqnarray}
So we get

\begin{eqnarray}
\int_{S^2}{\nabla x_i\cdot \nabla u}=\int_{S^2}\nabla x_i\cdot \nabla e^u.
\end{eqnarray}
Integrating by part on both sides,
\begin{eqnarray}
\lambda \int_{S^2}{x_i(1-e^u)}=\int_{S^2}{x_i\Delta u}=\int_{S^2} \Delta x_i e^u=-2\int_{S^2} x_i e^u
\end{eqnarray}
Since the integration of coordinate functions on $S^2$ vanishes, so we get (\ref{zero center})¡£
\end{proof}
We know from the above lemma that 
solutions of equation (\ref{lambda equation}) must lie in $\mathcal{M}$ except $\lambda=2$. In particular, for $\lambda=6$, solution of equation (\ref{lambda equation}) lies in $\mathcal{M}$. For $\lambda=2$, Onofri \cite{Ono} proved that the only solution of equation (\ref{lambda equation}) in $\mathcal{M}$ is zero. From Lemma \ref{4.2} and the result of Gui and Moradifam \cite{Gui}, we know that for $0<\l\leq4,\ \l\neq2,$ the solution of equation (\ref{lambda equation}) must be zero. Theorem \ref{1.1} tells us that for $4<\l\leq8,$ the even solution of equation (\ref{lambda equation}) must be axially symmetric. In the case $4<\l<6,$ the result of Lin \cite{Lin} tells us that the degree of \eqref{lambda equation} is 0, since the solution $u=0$ of \eqref{lambda equation} is non degenerate, it must have other solutions. Due to the possible blow up at $\l\to6,$ we conjecture that for $4<\l\leq6,$ the solution of equation (\ref{lambda equation}) must be even, thus axially symmetric. This will imply that for $\l=6,$ the solution of equation (\ref{lambda equation}) must be zero and will completely prove the rigidity of Hawking mass.

If we take $F$ as the second order spherical harmonics $\Delta F=-6F$, then we have
\begin{lem}
If $u$ satisfies
\begin{eqnarray}
\Delta u = \lambda(1-e^u)\nonumber
\end{eqnarray}
on standard $S^2$, then
\begin{eqnarray}\label{second eigenvalue}
\lambda(6-\lambda)\int_{S^2}{Fe^u}=\frac{1}{2}\int_{S^2}{(2\nabla^2 F-(\Delta F)g)(\nabla u,\nabla u)}.
\end{eqnarray}
\end{lem}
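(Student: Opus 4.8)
The plan is to repeat, almost verbatim, the computation carried out for Lemma~\ref{4.2}, the only change being that now $F$ is a second-order spherical harmonic on $S^2$, so that $\Delta F=-6F$ in place of $\Delta F=-2F$, and that the Hessian term no longer drops out. First I would apply the Kazdan--Warner identity \eqref{int condition} to the pair $(u,F)$ and integrate it over the closed manifold $S^2$. Since $S^2$ has no boundary, the divergence term $\nabla\bigl(2(\nabla F\cdot\nabla u)\nabla u-|\nabla u|^2\nabla F\bigr)$ integrates to zero, and we are left with
\[
2\int_{S^2}\Delta u\,(\nabla F\cdot\nabla u)=-\int_{S^2}(2\nabla^2 F-(\Delta F)g)(\nabla u,\nabla u).
\]

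Next I would substitute the equation $\Delta u=\lambda(1-e^u)$ into the left-hand side, turning it into $2\lambda\int_{S^2}(1-e^u)(\nabla F\cdot\nabla u)$, and evaluate the two resulting integrals by integration by parts. For the first, $\int_{S^2}\nabla F\cdot\nabla u=-\int_{S^2}F\,\Delta u=-\lambda\int_{S^2}F(1-e^u)=\lambda\int_{S^2}Fe^u$, where I use that $\int_{S^2}F=0$ for any spherical harmonic of positive degree. For the second, $\int_{S^2}e^u(\nabla F\cdot\nabla u)=\int_{S^2}\nabla F\cdot\nabla e^u=-\int_{S^2}e^u\,\Delta F=6\int_{S^2}Fe^u$. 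Hence the left-hand side equals $2\lambda(\lambda-6)\int_{S^2}Fe^u$, and combining this with the displayed identity and dividing by $2$ yields exactly \eqref{second eigenvalue}.

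I do not expect a genuine obstacle: this is the direct analogue of the $F=x_i$ computation, with $\Delta F=-6F$ as the only new input, and the steps are all elementary integrations by parts on a closed surface. The one point worth a word of care is that, unlike the first-order case where $2\nabla^2 F=(\Delta F)g$ forces the Hessian term to vanish identically, here $2\nabla^2 F-(\Delta F)g$ is a nonzero symmetric $2$-tensor, which is precisely why it is kept on the right-hand side of \eqref{second eigenvalue}; the other minor point is simply to track signs correctly when moving that term across the integrated identity.
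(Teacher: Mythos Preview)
Your proposal is correct and follows exactly the approach the paper intends: the lemma is stated immediately after the remark ``If we take $F$ as the second order spherical harmonics $\Delta F=-6F$, then we have'' and is not given a separate proof, so the reader is meant to rerun the computation of Lemma~\ref{4.2} with the Hessian term retained, which is precisely what you do. Your sign tracking and the two integrations by parts are all correct.
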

We can prove that there are no nonzero axially symmetric solutions for $\lambda=6$.
\begin{proposition}\label{axis sym}
If $u$ is an axially symmetric solution of
\begin{eqnarray}
\Delta u = 6(1-e^u)\nonumber
\end{eqnarray}
on standard $S^2$, then $u\equiv0$.
\end{proposition}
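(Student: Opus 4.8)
The plan is to reduce the axially symmetric equation $\Delta u = 6(1-e^u)$ to an ODE and then exploit the integrability identity \eqref{second eigenvalue} together with a sign analysis. Writing $x = \cos\theta \in [-1,1]$ for the height function on $S^2$ and assuming $u$ depends only on $\theta$, set $v(x) = u(\theta)$. The Laplace–Beltrami operator becomes $\Delta u = \frac{d}{dx}\big((1-x^2)v'\big)$, so the equation reads
\begin{eqnarray}
\frac{d}{dx}\big((1-x^2)v'(x)\big) = 6\big(1 - e^{v(x)}\big), \qquad x\in(-1,1),\nonumber
\end{eqnarray}
with $v$ smooth up to the endpoints (the coefficient $1-x^2$ vanishing there forces the appropriate regularity but imposes no boundary condition). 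First I would record the normalization: by Lemma \ref{4.2}, since $\lambda = 6 \neq 2$, we have $\int_{S^2} x_i e^u = 0$; for an axially symmetric $u$ the two horizontal moments vanish automatically, but the vertical one gives the genuine constraint $\int_{-1}^{1} x\, e^{v(x)}\,dx = 0$.

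Next I would apply the second-order Kazdan–Warner identity \eqref{second eigenvalue} with $\lambda = 6$, which kills the left-hand side and yields
\begin{eqnarray}
\int_{S^2} \big(2\nabla^2 F - (\Delta F) g\big)(\nabla u, \nabla u) = 0\nonumber
\end{eqnarray}
for every second-order spherical harmonic $F$. The natural choice is $F = x^2 - \tfrac13$ (so $\Delta F = -6F$), which is itself axially symmetric; for a function $u = u(\theta)$ the traceless Hessian term $\big(2\nabla^2 F - (\Delta F)g\big)(\nabla u, \nabla u)$ becomes an explicit expression in $x$, $v'(x)$, and $(1-x^2)$ — something of the form $c\,(1-x^2)\big(1 - 3x^2 \text{ or similar}\big)\,(v')^2$ after using that $\nabla u$ is parallel to $\nabla x$. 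Carrying out this computation turns the identity into a weighted integral of $(v')^2$ against a fixed polynomial weight $p(x)$ on $[-1,1]$.

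The heart of the argument is then to show this forces $v' \equiv 0$. The weight $p(x)$ from $F = x^2-\tfrac13$ will not have a fixed sign, so a single choice of $F$ does not immediately finish; the main obstacle is precisely to combine the information from the admissible $F$'s (the one axially symmetric second-order harmonic) with the zero-moment constraint $\int x e^v = 0$ and perhaps the first-order identity, to pin down the sign. I expect the cleanest route is: suppose $v \not\equiv 0$; analyze the nodal structure of $v$ on $[-1,1]$ (by the maximum principle / Sturm-type oscillation applied to the ODE, $v$ can have at most a controlled number of sign changes, and in fact the linearized operator $-\frac{d}{dx}(1-x^2)\frac{d}{dx} - 6e^v$ has a definite Morse index), then either derive a contradiction with the integrability identity directly, or invoke Lemma \ref{1.3} (the Bol-type inequality) on a suitable nodal domain after a stereographic projection to $\RR^2$ — on the domain $\{u > 0\}$ or $\{u<0\}$ one checks $\Delta w + e^w > 0$ for $w = u + \log(6/4)$ or an appropriate rescaling, and $\int e^w \le 8\pi$ follows from $\int_{S^2} e^u = 1$ after scaling, while the first eigenvalue bound $\lambda_{1,w} \le 0$ comes from the equation itself, giving $\int_\omega e^w > 4\pi$ on \emph{both} nodal domains, hence $\int e^w > 8\pi$, a contradiction. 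Either way, $v$ must be constant, and the zero-moment constraint (or simply plugging a constant into the equation) forces $v \equiv 0$.
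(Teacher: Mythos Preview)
Your instinct to use the second--order Kazdan--Warner identity \eqref{second eigenvalue} with the axially symmetric harmonic $F = x^2 - \tfrac13$ (equivalently $3x_1^2-1$) is exactly the paper's approach. The genuine gap is your assertion that ``the weight $p(x)$ from $F = x^2-\tfrac13$ will not have a fixed sign.'' It does have a fixed sign, and this is the whole point: once you carry out the computation you guessed at, the proof ends in one line, with no need for the moment constraint, nodal analysis, or Lemma~\ref{1.3}.

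Concretely, on $S^2$ one has $\nabla^2 F = -6x_1^2 g + 6\,dx_1\otimes dx_1$ and $\Delta F = -6F$, so
\[
\bigl(2\nabla^2 F - (\Delta F)g\bigr)(\nabla u,\nabla u)
= 6(x_1^2-1)|\nabla u|^2 + 12\,|\nabla x_1\cdot\nabla u|^2.
\]
The key observation you missed is that axial symmetry about the $x_1$--axis forces $\nabla u$ to be a scalar multiple of $\nabla x_1$, hence by Cauchy--Schwarz \emph{with equality}
\[
|\nabla x_1\cdot\nabla u|^2 = |\nabla x_1|^2\,|\nabla u|^2 = (1-x_1^2)\,|\nabla u|^2.
\]
Substituting, the two terms combine to $6(1-x_1^2)|\nabla u|^2 \ge 0$. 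In your ODE variables this is the weight $6(1-x^2)^2$ on $(v')^2$, which is nonnegative. Since $\lambda=6$ kills the left side of \eqref{second eigenvalue}, you get $\int_{S^2} 3(1-x_1^2)|\nabla u|^2 = 0$, hence $\nabla u \equiv 0$ and $u\equiv 0$. Your detour through Sturm oscillation and Bol's inequality is not only unnecessary but also, as written, not close to a proof: the nodal--domain argument you sketch does not obviously produce the required $\lambda_{1,w}\le 0$ on both pieces, and the rescaling you propose does not match the hypotheses of Lemma~\ref{1.3}.
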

\begin{proof}
Without of loss of generality we can assume $u$ axially symmetric about the $x_1$ axis,
take second order spherical harmonic $F=3x_1^2-1$ in (\ref{second eigenvalue}), then we have
\begin{eqnarray}
-(\Delta F)=6F, \quad \nabla^2 F=-6x_1^2g+6dx_1\otimes dx_1,
\end{eqnarray}that
\begin{eqnarray}
2\nabla^2 F-(\Delta F)g=6(x_1^2-1)g+12dx_1\otimes dx_1,
\end{eqnarray}and that\begin{eqnarray}
(2\nabla^2 F-(\Delta F)g)(\nabla u,\nabla u)=6(x_1^2-1)|\nabla u|^2+12|\nabla x_1\cdot\nabla u|^2.
\end{eqnarray}Since $u$ axially symmetric about the $x_1$ axis, $u$ is a function of $x_1$ and $\nabla u$ is a multiple of $\nabla x_1$, which implies $|\nabla x_1\cdot\nabla u|^2=|\nabla x_1|^2|\nabla u|^2=(1-x_1^2)|\nabla u|^2 $ and
\begin{eqnarray}
(2\nabla^2 F-(\Delta F)g)(\nabla u,\nabla u)=6(1-x_1^2)|\nabla u|^2.
\end{eqnarray}Inserting this into (\ref{second eigenvalue}) and taking $\l=6,$ we have
\begin{eqnarray}
\int_{S^2}3(1-x_1^2)|\nabla u|^2=0,
\end{eqnarray}
so $\nabla u\equiv0,\ \Delta u\equiv0$ and $u\equiv0$.
\end{proof}

\begin{rem}
Proposition \ref{axis sym} is nontrivial since there are nonzero axially symmetric solutions for $4<\lambda\leq 8$ and $\l\neq 6 $.
We refer the readers to Section 6 for more details.
\end{rem}
Let $X_1=x_2\partial_3-x_3\partial_2,\ X_2=x_3\partial_1-x_1\partial_3,\ X_3=x_1\partial_2-x_2\partial_1,$ be killing vector fields of $S^2$, then we have\begin{lem}\label{2.4}Assume $u$ a solution of \eqref{eq:2} then
$$\int_{S^2}X_iuX_juX_kue^u=0,\ \ \text{for}\ \ i,j,k\in\{1,2,3\}.$$\end{lem}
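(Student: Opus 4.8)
The plan is to use the Kazdan--Warner type identity \eqref{int condition} again, but now with a cleverly chosen function $F$ that produces the cubic quantity $X_iu\,X_ju\,X_ku\,e^u$ after integration by parts. The natural candidates are products of first-order spherical harmonics: since $X_ix_j$ are again linear combinations of the $x_\ell$, while $X_i(x_jx_k)$ involves second-order harmonics, one expects that a suitable choice among the quadratic functions $F=x_jx_k$ (together with the fact that the $X_i$'s are Killing, so they commute with $\Delta$ and preserve $d\omega$) will reduce the left-hand side of \eqref{int condition} to a multiple of the desired cubic integral. The first step is therefore to compute, for $F=x_ax_b$, the quantities $\Delta F$, $\nabla^2F$, and $\nabla F\cdot\nabla u$ in terms of the $X_i u$ and the coordinate functions, using $\nabla x_a\cdot\nabla x_b=\delta_{ab}-x_ax_b$ and the identification $|\nabla x_a|^2=1-x_a^2$ that was already used in the proof of Proposition \ref{axis sym}.

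The second step is to integrate \eqref{int condition} over $S^2$. The total-divergence term drops, and on the left one substitutes $\Delta u=\lambda(1-e^u)$; the constant part integrates to zero against a divergence-free rearrangement, so only the $e^u$-part survives, yielding an identity of the schematic form $\int_{S^2}\big(\text{quadratic in }X u\big)e^u=\int_{S^2}\big(2\nabla^2F-(\Delta F)g\big)(\nabla u,\nabla u)$. To promote this to a statement about \emph{cubic} expressions, I would then feed back a function $F$ that itself depends on $u$ through a first-order harmonic direction, or equivalently iterate: apply the identity with $F=x_a$ but replace $u$ by a function built from $u$ along a rotational direction. Concretely, the cleanest route is to apply \eqref{int condition} on $S^2$ with $F=x_i$ and with $u$ replaced by $u$ itself but pairing against $X_j$-derivatives, exploiting that $\int_{S^2}\Delta u\,(X_ju)(X_ku)e^{?}$ can be integrated by parts twice using $[X_j,\Delta]=0$ and $[X_j,X_k]=\pm X_\ell$. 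Collecting the commutator terms and symmetrizing over $i,j,k$ should leave precisely $\int_{S^2}X_iu\,X_ju\,X_ku\,e^u$ on one side and a sum of lower-order terms that cancel by the antisymmetry of the structure constants of $\mathfrak{so}(3)$ on the other, giving the vanishing.

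The main obstacle I anticipate is bookkeeping: the three Killing fields satisfy $[X_i,X_j]=\varepsilon_{ijk}X_k$, and the cross terms generated when one commutes $X$'s past $\Delta$ and past each other will be numerous; one must organize the computation so that the non-cubic contributions group into expressions that are manifestly antisymmetric in a pair of indices and hence vanish upon the symmetric summation built into the statement (which asks for all $i,j,k$, not a single fixed triple). A secondary subtlety is justifying the integrations by parts: since $u\in C^2$ by the hypotheses inherited from \eqref{eq:2} and $S^2$ is closed, there are no boundary terms, so this is routine once the algebra is arranged. I expect the final identity to read, after symmetrization, $c\int_{S^2}X_iu\,X_ju\,X_ku\,e^u=0$ with a nonzero numerical constant $c$, whence the claim.
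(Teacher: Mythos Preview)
Your proposal misses the actual mechanism and contains a misconception about the statement. The lemma asserts vanishing for \emph{each fixed} triple $(i,j,k)$, not only after symmetrizing over indices; your plan relies on ``the symmetric summation built into the statement'' and on hoped-for antisymmetry cancellations that are not available, so even if the bookkeeping could be carried out it would not prove the claim as stated. Moreover, the Kazdan--Warner identity \eqref{int condition} with $F=x_ax_b$ produces integrals that are \emph{quadratic} in $\nabla u$ weighted by $e^u$; your proposed ``iteration'' to reach cubic expressions is never made concrete, and replacing $u$ by some function ``built from $u$ along a rotational direction'' destroys the PDE structure $\Delta u=\lambda(1-e^u)$ that makes the identity useful. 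As written, the proposal is a sketch of a plan rather than a proof, and the plan does not converge.

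The paper's argument is much shorter and avoids \eqref{int condition} entirely. One simply differentiates the equation along Killing directions: since $[X_i,\Delta]=0$, applying $X_j$ and then $X_i$ to \eqref{eq:2} yields
\[
\frac{\a}{2}\Delta(X_iX_ju)+e^uX_iX_ju=-e^uX_iu\,X_ju.
\]
Multiplying by $X_ku$, integrating, and moving $\Delta$ across by self-adjointness gives
\[
\int_{S^2}e^uX_iu\,X_ju\,X_ku=-\int_{S^2}X_iX_ju\Bigl(\tfrac{\a}{2}\Delta X_ku+e^uX_ku\Bigr),
\]
and the bracket vanishes identically by the once-differentiated equation \eqref{eq:3}. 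No symmetrization, no commutator bookkeeping, and no choice of auxiliary $F$ are needed; the whole point is that $X_ku$ lies in the kernel of the linearized operator, so anything paired against it through that operator vanishes.
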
 \begin{proof}
Since killing vector fields commute with the Laplace operator we have
\begin{eqnarray}\label{eq:3}
\frac{\a}{2}\Delta X_iu +e^uX_iu=0,
\end{eqnarray}
and that
\begin{eqnarray}
\frac{\a}{2}\Delta X_iX_ju = -X_i( e^uX_ju)=- e^uX_iuX_ju- e^uX_iX_ju,
\end{eqnarray}
which implies that\begin{eqnarray*}
\int_{S^2}X_iuX_juX_kue^u=\int_{S^2}\left(\frac{\a}{2}\Delta X_iX_ju +e^uX_iuX_ju\right)X_ku\\
=\int_{S^2}X_iX_ju\left(\frac{\a}{2}\Delta X_ku +e^uX_ku\right)=\int_{S^2}X_iX_ju\cdot0=0,
\end{eqnarray*}this completes the proof.
\end{proof}

\section{Solutions of \eqref{eq:2}}
We assume $0<\a<1$ and $u$ a solution of \eqref{eq:2}, then $u$ satisfies the condition $\int_{S^2}e^ud\omega=1. $ When we write $ \int_{S^2}f,$ the volume element is induced from the metric
$g_0$, since the volume of $S^2$ is $4\pi,$ we have $ \int_{S^2}f=4\pi\int_{S^2}fd\omega$ and $\int_{S^2}e^u=4\pi. $
Following \cite{Lin1} let $ \Pi$ be the stereographic projection $ S^2\to\RR^2,$ with respect to the north
pole $N=(0,0,1):$$$\Pi(x_1,x_2,x_3):=\left(\frac{x_1}{1-x_3},\frac{x_1}{1-x_3}\right).$$
Suppose $u$ is a solution of \eqref{eq:2}, and let $$\overline{u}(y)=u(\Pi^{-1}(y))\ \ \text{for}\ \ y\in\RR^2.$$
Then $\overline{u} $ satisfies $$\Delta \overline{u}+\frac{8(e^{\overline{u}}-1)}{\a(1+|y|^2)^2}=0\ \ \text{in}\ \ \RR^2.$$
Now if we let $$v=\overline{u}-2\ln(1+|y|^2)+\ln(8/\a),$$ then $v$ satisfies \begin{equation}\label{eq:5}\Delta v+e^v=\frac{8(1-\a)}{\a(1+|y|^2)^2}>0\ \ \text{in}\ \ \RR^2,\end{equation} and $$\int_{\RR^2}e^vdy=\frac{8\pi}{\a},$$ more generally for a domain $ \Omega\subset\RR^2$ we have \begin{equation}\label{eq:8}\int_{\Omega}e^vdy=\frac{2}{\a}\int_{\Pi^{-1}(\Omega)}e^u
=\frac{8\pi}{\a}\int_{\Pi^{-1}(\Omega)}e^ud\omega,\end{equation} in particular let $B(0,r)=\{y\in\RR^2:|y|<r\}$ and assume $u$ even, then we have
\begin{equation}\label{eq:10}\int_{B(0,1)}e^vdy=\frac{8\pi}{\a}\int_{S^2\cap\{x_3<0\}}e^ud\omega
=\frac{4\pi}{\a}\int_{S^2}e^ud\omega=\frac{4\pi}{\a}.\end{equation}
We first prove that if $u$ is evenly symmetric about three orthogonal planes passing through the origin,
then $u$ is axially symmetric.
\begin{lem}\label{3.1}Let $ \frac{1}{4}\leq\a<1$ and $u$ be a solution of \eqref{eq:2}. If $$u(x_1,x_2,x_3)=u(|x_1|,|x_2|,|x_3|)\ \
\text{for}\ \ (x_1,x_2,x_3)\in S^2,$$ then $ X_iu\equiv0$ for some $i\in\{1,2,3\}$, thus $u$ is axially symmetric.\end{lem}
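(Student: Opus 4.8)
\medskip

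The plan is to argue by contradiction using the limiting Sphere Covering Inequality (Lemma \ref{1.3}) applied to the function $v$ from \eqref{eq:5} on suitable quarter- or half-domains of $\RR^2$. Suppose none of the $X_iu$ vanishes identically. Because $u$ is symmetric under the three reflections $x_j\mapsto-x_j$, each Killing field derivative $X_iu$ is odd under two of these reflections and even under the third; in particular $X_iu$ vanishes on the union of two coordinate great circles, and its nodal set in $\RR^2$ (after stereographic projection) contains the two coordinate axes. The first step is to pin down the nodal set of, say, $X_3 u = x_1\partial_2 u - x_2 \partial_1 u$: in the plane it is invariant under $y\mapsto -y$ and under both reflections $y_1\mapsto -y_1$, $y_2\mapsto -y_2$, and it contains $\{y_1=0\}\cup\{y_2=0\}$. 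If $X_3u\not\equiv 0$, I want to produce an open region $\omega$ bounded by pieces of nodal lines of $X_3u$ on which $X_3u$ has a sign, with $\omega$ contained in one of the four quadrants.

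\medskip

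The second step is the spectral input. From \eqref{eq:3}, the function $\phi = X_iu$ satisfies $\frac{\a}{2}\Delta\phi + e^u\phi = 0$ on $S^2$; transferring to the plane via the substitution defining $v$, one checks that $X_iu$ (pulled back) is an eigenfunction-type solution of $\Delta\psi + e^{v}\psi = 0$ in $\RR^2$. Hence on any nodal domain $\omega$ of this function we have $\lambda_{1,v}(\omega)=0$, so Lemma \ref{1.3} applies provided $\int_{\RR^2}e^v dy = \frac{8\pi}{\a}\le$ — wait, this is $\ge 8\pi$ when $\a\le 1$, so I cannot apply Lemma \ref{1.3} on all of $\RR^2$. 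This is exactly why the symmetry is used: by \eqref{eq:10} and its analogues, the integral of $e^v$ over a half-plane is $\frac{4\pi}{\a}\le 8\pi$ precisely when $\a\ge\frac14$, and over a quadrant it is $\frac{2\pi}{\a}$. So the correct setup is to take $\Omega$ to be a half-plane or quadrant (which is simply connected) on which $\int_\Omega e^v \le 8\pi$, and $\omega\subset\Omega$ the nodal domain of $X_iu$ inside it; then Lemma \ref{1.3} forces $\int_\omega e^v > 4\pi$. The contradiction will come from having two (or more) disjoint such nodal domains inside a region of total $e^v$-mass $\le 8\pi$, or one nodal domain together with the sign-definite complement, forcing strict inequality $8\pi \le \int_\Omega e^v$ which contradicts $\int_\Omega e^v = \frac{4\pi}{\a} < 8\pi$ for $\a>\frac12$, and requires a more careful counting of nodal domains when $\frac14\le\a\le\frac12$.

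\medskip

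So the third and most delicate step is the nodal-domain count. By the symmetry of $X_3u$ under the reflections and the antipodal map, its zero set in a half-plane, say $\{y_1>0\}$, is symmetric under $y_2\mapsto -y_2$ and contains the half-axis $\{y_1>0, y_2=0\}$. If $X_3u\not\equiv 0$ on this half-plane, I will show the half-plane splits into at least two nodal domains (e.g. the axis already separates $\{y_2>0\}$ from $\{y_2<0\}$, and if $X_3u$ changes sign across it we get two, while if it does not change sign across the axis one uses that $X_3u$ must vanish somewhere else, since $\int X_3u\, e^v$-type quantities or the Kazdan--Warner relations \eqref{zero center} constrain it). On each nodal domain $\omega_k\subset\{y_1>0\}$ we get $\int_{\omega_k}e^v>4\pi$ from Lemma \ref{1.3}; summing over $k\ge 2$ disjoint domains gives $\int_{\{y_1>0\}}e^v > 8\pi$, contradicting $\int_{\{y_1>0\}}e^v = \frac{4\pi}{\a}\le 8\pi$. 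Hence $X_3u\equiv 0$ on the half-plane, so $X_3u\equiv 0$ on $S^2$, and then $u$ depends only on $x_3$, i.e. $u$ is axially symmetric. The main obstacle I anticipate is the nodal-set topology: ruling out the degenerate possibility that $X_iu$ has a single nodal domain in the half-plane (so that Lemma \ref{1.3} gives only $\int e^v > 4\pi$, not a contradiction), which I expect to handle by exploiting the full symmetry group — the coordinate axis sits inside the nodal set and $X_iu$ is forced to change sign across it by its parity, producing the required second domain — together with the $\a\ge\frac14$ threshold ensuring the half-plane mass is at most $8\pi$ so that the inequality can be invoked at all.
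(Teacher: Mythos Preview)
There is a genuine gap. Your mass bookkeeping is off by a factor of $2$: the half-plane $\{y_1>0\}$ corresponds to a hemisphere, so by \eqref{eq:8} its mass is $\int_{\{y_1>0\}}e^vdy=\frac{4\pi}{\a}$, and this is $\le 8\pi$ only when $\a\ge\frac{1}{2}$, not $\frac{1}{4}$. So your scheme ``two nodal domains $+$ Lemma~\ref{1.3} $\Rightarrow$ mass $>8\pi$'' works on the half-plane exactly in the regime already covered by \cite{Gui}. To reach $\a\ge\frac{1}{4}$ you must work on a region that sees only a quarter of the sphere, e.g.\ a quadrant or the half-disk $\Omega=B(0,1)\cap\{y_2<0\}$, where indeed $\int_\Omega e^vdy=\frac{2\pi}{\a}\le 8\pi$. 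But on such a region your parity argument no longer manufactures a second nodal domain: the interior axis is gone, and all Lemma~\ref{1.3} tells you is that $\varphi_i$ cannot have \emph{two} nodal domains there --- it may perfectly well have one. This is not a technicality you can patch with Kazdan--Warner; there is no obstruction to $\varphi_i$ being sign-definite on each open octant.

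The paper resolves this by turning the failure into information rather than a contradiction. Applying Lemma~\ref{1.3} on the half-disk shows that each $\varphi_i$ has a \emph{single} nodal domain there, hence a definite sign $\varepsilon_i'$ on the open octant $\Sigma=S^2\cap\{x_1<0,x_2<0,x_3<0\}$. The triple product $X_1uX_2uX_3u$ is invariant under all three reflections (each factor flips under exactly two of them), so $\varepsilon_1'\varepsilon_2'\varepsilon_3'X_1uX_2uX_3u>0$ on all of $S^2\setminus\{x_1x_2x_3=0\}$. The contradiction then comes from an ingredient you never invoke, the cubic integral identity of Lemma~\ref{2.4}:
\[
\int_{S^2}X_1uX_2uX_3u\,e^u=0,
\]
which is derived separately from \eqref{eq:3} by integrating $X_iX_ju$ against $X_ku$. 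Without this identity (or something equivalent) your nodal-domain count alone cannot close the argument in the range $\frac{1}{4}\le\a<\frac{1}{2}$.
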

\begin{proof}Notice that the condition is equivalent to \begin{equation}\label{eq:9}u(\varepsilon_1x_1,\varepsilon_2x_2,\varepsilon_3x_3)= u(x_1,x_2,x_3) \end{equation} for $ \varepsilon_1,\varepsilon_2,\varepsilon_3\in\{\pm1\},$ and this implies that $$X_iu(\varepsilon_1x_1,\varepsilon_2x_2,\varepsilon_3x_3)=\varepsilon_i\varepsilon_1\varepsilon_2\varepsilon_3X_iu(x_1,x_2,x_3), $$ that $X_iu=0$ on $\{x_j=0\}\cap S^2$ and that \begin{equation}\label{eq:6}(X_1uX_2uX_3u)(\varepsilon_1x_1,\varepsilon_2x_2,\varepsilon_3x_3)=(X_1uX_2uX_3u)(x_1,x_2,x_3), \end{equation} for $\{i,j\}\subset\{1,2,3\},\ \varepsilon_i\in\{\pm1\}.$ Now suppose that $X_iu$ is not identically zero for every $i\in\{1,2,3\}$, let $ \varphi_i=(X_iu)\circ \Pi^{-1},$ then $\varphi_i$ is not identically zero and by \eqref{eq:3} we have
\begin{equation}\label{eq:4}\Delta \varphi_i=-\frac{8e^{\overline{u}}\varphi_i}{\a(1+|y|^2)^2}=-e^v\varphi_i\ \ \text{in}\ \ \RR^2.\end{equation} Since $X_1u=0$ on $(\{x_2=0\}\cup\{x_3=0\})\cap S^2$, we have $\varphi_1=0 $ on $\{y_2=0\}\cup\{|y|=1\},$ in particular $\varphi_1=0 $ on $\partial\Omega$ for $ \Omega=B(0,1)\cap\{y_2<0\}$. Using \eqref{eq:6} and \eqref{eq:8} we have \begin{equation}\label{eq:7}\int_{\Omega}e^vdy=\frac{8\pi}{\a}\int_{S^2\cap\{x_3<0,x_2<0\}}e^ud\omega
=\frac{2\pi}{\a}\int_{S^2}e^ud\omega=\frac{2\pi}{\a}\leq 8\pi,\end{equation} If $ \Omega\nsubseteq\{\varphi_1\neq0\}$ then the nodal line of $\varphi_1 $ divides $ \Omega$ into at least two regions $\Omega_1,\Omega_2 $. By \eqref{eq:4} we have $\l_{1,v}(\Omega_i)\leq0,$ by \eqref{eq:5}, \eqref{eq:7} and Lemma \ref{1.3} we have $$\int_{\Omega_i}e^vdy>4\pi\ \ \text{for}\ \ i=1,2,$$ and that $$\int_{\Omega}e^vdy\geq\int_{\Omega_1}e^vdy+\int_{\Omega_2}e^vdy>8\pi,$$
 which is a contradiction. Therefore $\varphi_1\neq 0 $ in $ \Omega,$ $\varphi_1$ does not change sign in $ \Omega,\ i.e.\ \exists\ \varepsilon_1'\in\{\pm1\}\ s.t.\ \varepsilon_1'\varphi_1>0$ in $ \Omega,$ thus $\varepsilon_1X_1u>0$ in $ \Sigma=S^2\cap\{x_1<0,x_2<0,x_3<0\}.$ Similarly we can find $\varepsilon_2',\varepsilon_3'\in\{\pm1\} $ such that $\varepsilon_2'X_2u>0,\ \varepsilon_3'X_3u>0$ in $ \Sigma.$ By \eqref{eq:6} we conclude that $\varepsilon_1'\varepsilon_2'\varepsilon_3'X_1uX_2uX_3u>0 $ in $S^2\setminus\{x_1x_2x_3=0\}$ and that $$\varepsilon_1'\varepsilon_2'\varepsilon_3'\int_{S^2}X_1uX_2uX_3ue^u>0,$$ which is contradict to Lemma \ref{2.4}. Therefore one of $X_iu$ is identically zero, this completes the proof.\end{proof}
 We need the following result to prove the even symmetry about a plane.
 \begin{lem}\label{3.2}Let $ \frac{1}{4}\leq\a<1$ and $u$ be a solution of \eqref{eq:2}, $u(x)=u(-x)$. If $P\in S^2$ is a critical point of $u$, then $u$ is evenly symmetric
about the plane passing through the origin $O$ and orthogonal to $ \overrightarrow{OP}$.\end{lem}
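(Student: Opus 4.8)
The plan is to set up the problem so that evenness about the orthogonal plane becomes a statement about the sign of the angular derivative of $u$ on a well-chosen hemisphere, and then run the same nodal-domain dichotomy that powered Lemma \ref{3.1}. Concretely, after a rotation I may assume $P$ is the south pole $(0,0,-1)$, so that $\overrightarrow{OP}$ is the $x_3$-axis and the plane in question is $\{x_3=0\}$; evenness about this plane means $u(x_1,x_2,-x_3)=u(x_1,x_2,x_3)$, which is equivalent to the vanishing of the ``vertical'' derivative field along the equator in the appropriate sense — more precisely, to showing that the reflection $\tilde u(x)=u(x_1,x_2,-x_3)$ coincides with $u$. Using the antipodal symmetry $u(x)=u(-x)$, note that the reflection across $\{x_3=0\}$ composed with the antipodal map is the reflection across the $x_3$-axis inside the plane, so it will be convenient to track one rotation vector field, say $Y = x_1\partial_2 - x_2\partial_1$ (rotation about the $x_3$-axis), which fixes $P$. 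Set $\psi = (Yu)\circ\Pi^{-1}$; by the same computation as in \eqref{eq:4}, $\psi$ satisfies $\Delta\psi = -e^v\psi$ in $\RR^2$.

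Next I would extract the boundary data for $\psi$ on a suitable simply-connected region. Since $P=(0,0,-1)$ corresponds under $\Pi$ to the origin $0\in\RR^2$, and since $P$ is a critical point of $u$, the field $Yu$ vanishes at $P$, hence $\psi(0)=0$. Moreover $Y$ vanishes identically on the two poles, and the antipodal symmetry forces $\psi$ to vanish on the image of a great circle through $P$; I expect the right domain to be a half-disk or disk $\Omega\subset\RR^2$ whose preimage is a hemisphere bounded by a great circle containing $P$, on the boundary of which $\psi\equiv 0$, and whose $e^v$-mass is exactly $\tfrac{4\pi}{\a}\le 8\pi$ (by \eqref{eq:10} and $\tfrac14\le\a<1$). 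Then the argument is: if $\Omega\not\subset\{\psi\neq 0\}$, the nodal line of $\psi$ splits $\Omega$ into at least two subdomains $\Omega_1,\Omega_2$, each with $\l_{1,v}(\Omega_i)\le 0$ by the equation for $\psi$, so Lemma \ref{1.3} gives $\int_{\Omega_i}e^v\,dy>4\pi$ and hence $\int_\Omega e^v\,dy>8\pi$, a contradiction. Therefore $\psi$ has a sign on $\Omega$, i.e. $Yu$ has a sign on the corresponding hemisphere.

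The final step is to upgrade ``$Yu$ has a fixed sign on a hemisphere'' to ``$Yu\equiv 0$, hence $u$ is even about $\{x_3=0\}$.'' Here I would use the Kazdan--Warner-type identity: $Y$ is a Killing field generated by the first spherical harmonic (rotation about $x_3$), so integrating \eqref{eq:3} against a suitable test function, or more directly invoking Lemma \ref{4.2}/Lemma \ref{2.4}, should produce an integral of $Yu$ (or of a product involving $Yu$) against a weight of fixed sign that must vanish — forcing $Yu\equiv 0$ on the hemisphere and then everywhere by unique continuation for \eqref{eq:4}. Once $Yu\equiv 0$, $u$ is invariant under all rotations fixing $P$, so $u$ depends only on $x_3$; combined with $u(x)=u(-x)$ this gives $u(x_1,x_2,-x_3)=u(x_1,x_2,x_3)$, which is the claimed even symmetry about the plane orthogonal to $\overrightarrow{OP}$.

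The main obstacle I anticipate is the bookkeeping in the middle step: choosing the domain $\Omega$ so that (a) it is simply connected, (b) $\psi$ genuinely vanishes on all of $\partial\Omega$ — this uses both the vanishing of the Killing field at $P$ and the antipodal symmetry, and one must be careful that the relevant great circle really is in the zero set of $Yu$ — and (c) the $e^v$-mass of $\Omega$ is at most $8\pi$, which is where $\a\ge\tfrac14$ is essential. A secondary subtlety is the sign/nonvanishing conclusion: Lemma \ref{1.3} only gives strict inequality when $\l_{1,v}\le 0$, so I must make sure the nodal line of $\psi$, if nontrivial in $\Omega$, actually produces two full subdomains rather than, say, a single one touching the boundary in a degenerate way; this is handled by the standard observation that $\psi=0$ on $\partial\Omega$ plus $\Delta\psi=-e^v\psi$ forces nodal components to be genuine open sets on which the first eigenvalue is nonpositive, exactly as in the proof of Lemma \ref{3.1}.
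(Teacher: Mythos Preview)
Your plan has a genuine gap, and in fact the conclusion you are aiming for is strictly stronger than the lemma and is false. You try to prove $Yu\equiv 0$ with $Y=X_3$, i.e.\ that $u$ is \emph{axially} symmetric about the axis through $P$; the lemma only asserts \emph{reflection} symmetry about the plane orthogonal to $\overrightarrow{OP}$. These differ: for $4<\lambda\le 8$, $\lambda\neq 6$, there are nonzero even solutions of \eqref{eq:2} axially symmetric about, say, the $x_1$-axis (Section~6). Writing $u=f(x_1)$ with $f$ even, every point of $\{x_1=0\}$ is critical, in particular $P=(0,0,-1)$; yet $X_3u=-x_2f'(x_1)\not\equiv 0$, so your argument would force $u$ constant. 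The specific failure is the boundary condition: antipodal evenness makes $X_3u$ \emph{even}, not zero on any great circle, and the vector field $X_3$ vanishes only at the two poles, so there is no domain $\Omega$ with $\psi|_{\partial\Omega}=0$ for free. A second problem is the mass count: $\int_{B(0,1)}e^v=4\pi/\a$, and $4\pi/\a\le 8\pi$ requires $\a\ge\tfrac12$, not $\a\ge\tfrac14$; Lemma~\ref{1.3} on a hemisphere-sized region cannot reach the threshold $\a=\tfrac14$.

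The paper proceeds differently. It compares $u$ with its reflection $u^*(x_1,x_2,x_3)=u(x_1,x_2,-x_3)$ and studies $\widetilde u=u-u^*$, which vanishes \emph{automatically} on the equator. After stereographic projection, $\widetilde v=\widetilde u\circ\Pi^{-1}$ satisfies $\widetilde v|_{\partial B(0,1)}=0$, the oddness $\widetilde v(-y)=-\widetilde v(y)$ (from $u(x)=u(-x)$), and $\widetilde v(0)=\nabla\widetilde v(0)=0$ (from $P$ critical). If $\widetilde v\not\equiv 0$, its leading harmonic part at $0$ has odd degree $m\ge 3$, and a topological argument produces at least four simply connected nodal components $\Omega_1,\Omega_2,\Omega_3=-\Omega_1,\Omega_4=-\Omega_2$ in $B(0,1)$. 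One then applies the \emph{Sphere Covering Inequality} (Lemma~\ref{1.2}, not Lemma~\ref{1.3}) to the pair $v_1,v_2$ on $\Omega_1$ and $\Omega_2$; using $v_1(y)=v_2(-y)$ this yields $4\pi/\a=\int_{B(0,1)}e^{v_1}\ge\sum_{j=1}^{2}\int_{\Omega_j}(e^{v_1}+e^{v_2})>16\pi$, contradicting $\a\ge\tfrac14$. The two key ideas you are missing are (i) the correct test function $u-u^*$, which carries the boundary data for free, and (ii) the two-function inequality, which gives $8\pi$ per nodal domain and hence the sharp threshold.
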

\begin{proof}Without of loss of generality we can assume $P=(0,0,1),$ it is enough to
prove that $u$ is symmetric about the $x_1x_2$-plane. Define $u^*(x_1,x_2,x_3)=\\u(x_1,x_2,-x_3)$ and $ \widetilde{u}(x)=u(x)-u^*(x).$ Notice that $ \widetilde{u}(x_1,x_2,0)=0$, for all $(x_1,x_2,0)\in S^2. $ Then $ \widetilde{u}$ satisfies
\begin{equation}\label{eq:3.7}
\frac{\a}{2}\Delta \widetilde{u}+c(x)\widetilde{u}=0,\ \ \text{on}\ \ S^2,
\end{equation} where $$c(x):=\int_0^1e^{tu+(1-t)u^*}dt.$$ Since $u(x)=u(-x)$, we have $u^*(x_1,x_2,x_3)=u(-x_1,-x_2,x_3)$ and $ \\-\widetilde{u}(x_1,x_2,x_3)=\widetilde{u}(-x_1,-x_2,x_3),\ \widetilde{u}(P)=0. $ Since $P\in S^2$ is a critical point of $u$, we have $\nabla u^*(P)=\nabla u(P)=0,\ \nabla\widetilde{u}(P)=0.$

 Now let $\widetilde{v}=\widetilde{u}\circ \Pi^{-1} $
 then $\widetilde{v}$ satisfies \begin{equation}\label{eq:3.8}\Delta \widetilde{v}=-\frac{8c\circ \Pi^{-1}\widetilde{v}}{\a(1+|y|^2)^2}:=-\widetilde{c}\widetilde{v}\ \ \text{in}\ \ \RR^2,\end{equation}
 and $\widetilde{v}=0 $ on $ \partial B(0,1),\ \widetilde{v}(0)=\nabla\widetilde{v}(0)=0,\ -\widetilde{v}(y)=\widetilde{v}(-y)$ for $y\in\RR^2.$
If $\widetilde{v}\not\equiv0 $ then $\widetilde{v}(y)=Q(y)+O(|y|^{m+1}) $ as $y\to0,$ where $Q(y)$ is a quadratic polynomial of degree $m$ with $m\geq 2,$ in fact $Q(y)=Re(a(y_1+iy_2)^m)$ for some $0\neq a\in\CC.$ Since $-\widetilde{v}(y)=\widetilde{v}(-y)$, we have $m$ odd and $m\geq 3.$ Thus, the nodal line $\{y:\widetilde{v}(y)=0\}$ divides $B(0,r)$ into $2m$ regions for some $0<r<1,$ we can order them in counter-clockwise direction as $ V_1,\cdots,V_{2m}, $ then $V_{j+m}=-V_j $ for $1\leq j\leq m,$ and we can assume
$ (-1)^j\widetilde{v}(y)>0$ in $V_j $. Now we claim that $\{\widetilde{v}\neq 0\}\cap B(0,1) $ has at least four simply connected components $ \Omega_1, \Omega_2,\ \Omega_3=-\Omega_1,\ \Omega_4=\Omega_2.$ Once the claim is true, define $v_1,v_2$ as follows $$v_1(y)={u}(\Pi(y))-2\ln(1+|y|^2)+\ln(8/\a),$$ and $$v_2(y)={u}^*(\Pi(y))-2\ln(1+|y|^2)+\ln(8/\a),$$ then $v_1$ and $v_2$ both satisfy \eqref{eq:5} and $\widetilde{v}=v_1-v_2, $ $v_1=v_2$ on $ \partial\Omega_j,j=1,2,$ $v_1(y)=v_2(-y).$ Applying the Sphere Covering Inequality (Lemma \ref{1.2}) on $ \Omega_j,j=1,2$ and using \eqref{eq:10} we obtain that \begin{align*}\frac{4\pi}{\a}=\int_{B(0,1)}e^{v_1}dy\geq\sum_{j=1}^4\int_{\Omega_j}e^{v_1}dy
=\sum_{j=1}^2\int_{\Omega_j}(e^{v_1}+e^{v_1})dy>\sum_{j=1}^24\pi=8\pi.\end{align*} Hence $\a<\frac{1}{4},$ which is a contradiction. Thus $\widetilde{v}\equiv0,\ \widetilde{u}\equiv0,\ u\equiv u^* $ and the result is true. It remains to prove the claim.

Let $ \gamma$ be the connected component of $ \{\widetilde{v}=0\}$ containing $0$. Since $-\widetilde{v}(y)=\widetilde{v}(-y), $ we have $\gamma=-\gamma.$ If $ \gamma$ is bounded, let $V_0$ be the unbounded component of $\RR^2\setminus\gamma$, then $V_0=-V_0$ and $ \partial V_0\subseteq\gamma.$ There exists a neighborhood of $ \partial V_0$ denoted by $U_0$ such that $U_0=-U_0$ and $ \widetilde{v}$ does not change sign in $ U_0\cap V_0,$ which is contradict to $-\widetilde{v}(y)=\widetilde{v}(-y).$ Therefore $ \gamma$ is unbounded, $ \gamma$ must intersect $ \partial B(0,1),$ since $ \partial B(0,1)\subseteq\{\widetilde{v}=0\},$ we have $ \partial B(0,1)\subseteq\gamma.$

Let $V_j^*$ be the component of $\RR^2\setminus\gamma$ containing $V_j$, then $V_j^*$ is simply connected and $V_j\subseteq V_j^*\subseteq B(0,1) $ for $1\leq j\leq 2m,$ as $-\widetilde{v}(y)=\widetilde{v}(-y), $ we have $V_{j+m}^*=-V_j^* $ for $1\leq j\leq m.$  Define $ A=\{V_{2j-1}^*:j\in\ZZ\cap[1,m]\},\ B=\{V_{2j}'^*:j\in\ZZ\cap[1,m]\},$ then the sets $A$ and $B$ are finite and nonempty, $B=\{-V:V\in A\}.$ If $V=V_{2j-1}^*=V_{2k}^*\in A\cap B$ then $ 0\in\partial V\subseteq\gamma,$ and there exists a neighborhood of $ \partial V$ denoted by $U$ such that $ \widetilde{v}$ does not change sign in $ U\cap V,$ which is a contradiction since $ \widetilde{v}(y)$ has different signs in $V_{2j-1}$ and $V_{2k},$ therefore $ A\cap B=\emptyset.$

If $A$ has only one element, so has $B$, and $V_1^*=V_3^*\neq V_2^*=V_4^*. $ Choose $p_j\in V_j^*\cap \partial B(0,r)$ for $1\leq j\leq 2m,$ then we can find curves $ \gamma_j$ in $V_j^*\setminus B(0,r)$ connecting $p_j$ and $p_{j+2}$ for $j=1,2,$ and $ \gamma_0$ in $\overline{V_1}\cup\overline{V_2}\cup\overline{V_3}$ connecting $p_1$ and $p_{3},$ such that $ \gamma_0\cap\partial B(0,r)=\gamma_1\cap\partial B(0,r)=\{p_1,p_3\},\ \gamma_2\cap\partial B(0,r)=\{p_2,p_4\}$ and $ \gamma_*=\gamma_0\cap\gamma_1$ is an embedded circle. Then $p_2$ and $p_{4}$ are separated by $ \gamma_*$ and $ \gamma_2\cap\gamma_0= \gamma_2\cap\gamma_1=\emptyset$, which is a contradiction.

Therefore $A$ has more than one elements, say $ V_k^*,V_l^*\in A,\ V_k^*\neq V_l^*.$ Choose $k',l'\in[1,2m]$ such that $|k-k'|=m,\ |l-l'|=m,$ then $ V_k^*,V_l^*,V_{k'}^*=-V_k^*, V_{l'}^*=-V_l^*$ are distinct. Let $W_1=V_k^*,W_2=V_l^*,W_3=V_{k'}^*=-W_1, W_4=V_{l'}^*=-W_2,$ then $W_i\cap W_j=\emptyset$ for $1\leq i<j\leq4.$ Now $W_i$ contains at least one simply connected component of $\{\widetilde{v}\neq 0\}\cap B(0,1) $, say $ \Omega_i$, and $ -\Omega_i\subseteq W_{i+2}$ for $i=0,1,$ thus $ \Omega_1,\Omega_2$ satisfy the condition in the claim. This completes the proof.\end{proof}

Now we can prove Theorem \ref{1.1}.

\begin{proof}[Proof of Theorem \ref{1.1}] Assume that $P\in S^2$ is a maximum point of $u$. Without of loss of generality we can assume $P=(1,0,0),$ by Lemma \ref{3.2} we have $u(x_1,x_2,x_3)=u(-x_1,x_2,x_3). $ Let $u_0$ be the restriction of $u$ to $S^2\cap\{x_1=0\}$ and
assume that $Q\in S^2$ is a maximum point of $u_0$. Since $u$ is symmetric about $x_2x_3$-plane, $Q$ is
also a critical point of $u$ on $S^2.$ Without of loss of generality we can assume $Q=(0,1,0),$ by Lemma \ref{3.2} we have $u(x_1,x_2,x_3)=u(x_1,-x_2,x_3). $ Now we have $u(x_1,x_2,x_3)=u(|x_1|,|x_2|,x_3). $ Since $u(x)=u(-x)$, we also have $u(x_1,x_2,x_3)=u(-x_1,-x_2,-x_3)=u(|-x_1|,|-x_2|,-x_3)=u(|x_1|,|x_2|,x_3) $ and that $u(x_1,x_2,x_3)=u(|x_1|,|x_2|,|x_3|). $ By Lemma \ref{3.1}, $u$ is axially symmetric. If $\a=\frac{1}{3}$ then $\l=6,$ by Proposition \ref{axis sym} we have $u=0.$ This completes the proof.\end{proof}
Now we consider solutions of \eqref{eq:2} with no symmetry assumption.
 \begin{lem}\label{3.3}Let $ \frac{1}{4}\leq\a<1$ and $u$ be a solution of \eqref{eq:2}. If $e^u\leq 6\a$ in $S^2$, then 
 $u$ is axially symmetric. In particular, if $\a=\frac{1}{3}$ and $e^u\leq 2$ then $u\equiv0.$\end{lem}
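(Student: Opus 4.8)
The plan is to mimic the proof of Theorem \ref{1.1}, replacing the even-symmetry hypothesis $u(x)=u(-x)$ by the pointwise bound $e^u\leq 6\a$ wherever that hypothesis was used. The hypothesis $u(x)=u(-x)$ entered the argument in exactly two places: first, in the mass computations \eqref{eq:10} and \eqref{eq:7}, where evenness gave $\int_{B(0,1)}e^vdy=\frac{4\pi}{\a}$ and $\int_{\Omega}e^vdy=\frac{2\pi}{\a}$ by symmetry of $S^2$ under $x\mapsto -x$; second, in the parity relations $-\widetilde v(y)=\widetilde v(-y)$ used in Lemma \ref{3.2}. So I would try to re-derive the key mass bound $\int_{\text{(half-disk)}}e^vdy\leq 8\pi$ directly from $e^u\leq 6\a$, and then re-examine Lemma \ref{3.2}.

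First I would establish the global mass bound. By \eqref{eq:8} with $\Omega=\RR^2$ we have $\int_{S^2}e^u=4\pi$, but now $\int_{S^2}e^u=4\pi$ always; the point is to bound integrals over pieces of $S^2$. The cleanest route: from $e^u\leq 6\a$ we get, for any measurable $E\subset S^2$, $\int_{\Pi^{-1}(E)}e^u\leq 6\a\,|\Pi^{-1}(E)|$, hence by \eqref{eq:8}, $\int_{E}e^vdy=\frac{2}{\a}\int_{\Pi^{-1}(E)}e^u\leq 12\,|\Pi^{-1}(E)|\leq 12\cdot 4\pi=48\pi$ at worst — too weak. Instead I should localize more carefully: the nodal-domain argument in Lemma \ref{3.1} (for $X_iu$) and in Lemma \ref{3.2} only needs the mass of $e^v$ over a specific region $\Omega$ (a half-disk, or $B(0,1)\cap\{y_2<0\}$) to be $\leq 8\pi$. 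For Lemma \ref{3.1}, the region $\Sigma=S^2\cap\{x_1<0,x_2<0,x_3<0\}$ is an octant; but without evenness I cannot split into octants. So the better strategy is: run the argument of Lemma \ref{3.1} replacing the repeated reflections by a single use of the hypothesis. Concretely, once $u$ is shown symmetric about one plane — say $\{x_1=0\}$ — via Lemma \ref{3.2}, then on each hemisphere $\{x_1>0\}$ and $\{x_1<0\}$ the function $X_1u$ vanishes on the boundary great circle, and $\int_{S^2\cap\{x_1<0\}}e^v$-type mass is $\leq 8\pi$ because $e^u\leq 6\a$ forces $\int_{S^2\cap\{x_1<0\}}e^ud\omega\leq\frac12$ is NOT automatic — rather I should use $\int_{\Pi^{-1}(\text{half-disk})}e^u\leq 6\a\cdot(\text{area})$ and compute that area. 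For the half-disk $B(0,1)\cap\{y_2<0\}$, $\Pi^{-1}$ maps it to a quarter of $S^2$ with area $\pi$, giving mass $\leq 6\a\cdot\pi\cdot\frac{2}{\a}\cdot\frac{1}{2\pi}$... I need to track the normalization \eqref{eq:8} carefully, but the target inequality $\leq 8\pi$ should come out precisely from $e^u\leq 6\a$, since $6\a\cdot(\text{area of }\Pi^{-1}(\Omega))$ and $\frac{2}{\a}$ combine to kill the $\a$ and leave a universal constant, tuned so that $6$ is the threshold.

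Next, for the reflection step I would revisit Lemma \ref{3.2}. Its proof used $u(x)=u(-x)$ only to get the antipodal parity $-\widetilde v(y)=\widetilde v(-y)$, which forced the order $m$ of vanishing of $\widetilde v$ at the origin to be odd (hence $\geq 3$), producing $2m\geq 6$ nodal sectors and ultimately four disjoint simply-connected components on which to apply the Sphere Covering Inequality. Without antipodal symmetry, the nodal set of $\widetilde v$ at its zero of order $m\geq 2$ still produces $2m\geq 4$ sectors, and since $\widetilde v=0$ on $\partial B(0,1)$, I expect one can still extract at least two disjoint simply-connected components $\Omega_1,\Omega_2$ of $\{\widetilde v\neq 0\}\cap B(0,1)$ on which $v_1=v_2$ on the boundary and $v_1>v_2$ (or the reverse). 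Applying Lemma \ref{1.2} on each gives $\sum_{j=1}^2\int_{\Omega_j}(e^{v_1}+e^{v_2})dy>8\pi$; but now I only have $\int_{B(0,1)}e^{v_1}dy\leq 8\pi$ from the mass bound (using $e^u\leq 6\a$ over $\Pi^{-1}(B(0,1))=S^2\cap\{x_3<0\}$, a hemisphere of area $2\pi$), and similarly for $v_2$ — careful, $v_2$ comes from $u^*$ which also satisfies $e^{u^*}\le 6\a$. This gives a contradiction as before. The main obstacle I anticipate is precisely this combinatorial/topological step in Lemma \ref{3.2}: without the antipodal symmetry the bookkeeping of the nodal sectors $V_j$ and the components $V_j^*$ changes, and I must ensure that a zero of order exactly $2$ (which the parity previously excluded) still yields enough disjoint components — possibly $m=2$ gives only four sectors but, combined with $\partial B(0,1)\subset\{\widetilde v=0\}$, still two components $\Omega_1,\Omega_2$ suffice since Lemma \ref{1.2} counts each with both $e^{v_1}$ and $e^{v_2}$. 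I would also need the analogue of Lemma \ref{3.1}: after proving symmetry about three orthogonal planes one still wants axial symmetry, and here the octant mass bound must again come from $e^u\le 6\a$ rather than from evenness; this is routine once the normalization constant is pinned down. So the overall structure is unchanged — Lemma \ref{3.2}, then Lemma \ref{3.1}, then Proposition \ref{axis sym} for the $\a=\frac13$ case — with the hypothesis $e^u\le 6\a$ feeding every mass estimate that evenness previously supplied.
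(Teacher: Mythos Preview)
Your approach has a genuine gap: the mass bounds you hope to extract from $e^u\le 6\alpha$ are not sharp enough, and the constant $6\alpha$ is not tuned to any area computation. Concretely, by \eqref{eq:8} and $e^u\le 6\alpha$ one gets, over the hemisphere $\Pi^{-1}(B(0,1))=S^2\cap\{x_3<0\}$ of area $2\pi$,
\[
\int_{B(0,1)}e^{v_1}\,dy\;=\;\frac{2}{\alpha}\int_{S^2\cap\{x_3<0\}}e^u\;\le\;\frac{2}{\alpha}\cdot 6\alpha\cdot 2\pi\;=\;24\pi,
\]
not $\le 8\pi$ as you assert. Over the quarter-sphere (the half-disk in Lemma~\ref{3.1}) you get $\le 12\pi$, again too large for Lemma~\ref{1.3}. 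Moreover, in the reflection step you lose both structural ingredients that make Lemma~\ref{3.2} work: without $u(x)=u(-x)$ you no longer know the vanishing order $m$ at the critical point is odd (so $m=2$ is allowed), and you lose the identity $v_1(y)=v_2(-y)$ that let the paper convert four $e^{v_1}$-integrals into two $(e^{v_1}+e^{v_2})$-integrals. With only two nodal components and no such pairing, two applications of Lemma~\ref{1.2} yield $\sum_{j=1}^2\int_{\Omega_j}(e^{v_1}+e^{v_2})\ge 16\pi$, to be compared with $\int_{B(0,1)}(e^{v_1}+e^{v_2})\le 48\pi$; no contradiction.

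The paper's proof is entirely different and explains why the threshold is exactly $6\alpha$. One sets $\widetilde u(x)=u(x)-u(-x)$ and observes it is odd, hence $P_j\widetilde u=0$ for all even $j$; the Kazdan--Warner identity (Lemma~\ref{4.2}) gives $P_1u=0$, so also $P_1\widetilde u=0$. Thus $\widetilde u\perp E_0\oplus E_1\oplus E_2$ and the spectral gap yields
\[
\int_{S^2}c(x)\,\widetilde u^{\,2}\;=\;\frac{\alpha}{2}\int_{S^2}|\nabla\widetilde u|^2\;\ge\;\frac{\alpha}{2}\,\lambda_3\int_{S^2}\widetilde u^{\,2}\;=\;6\alpha\int_{S^2}\widetilde u^{\,2},
\]
where $c(x)=\int_0^1 e^{tu+(1-t)u(-x)}\,dt$. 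The hypothesis $e^u\le 6\alpha$ forces $c\le 6\alpha$, so equality must hold pointwise and $\widetilde u\equiv 0$; hence $u$ is even, and Theorem~\ref{1.1} finishes. The number $6\alpha=\tfrac{\alpha}{2}\lambda_3$ is a spectral threshold, not an isoperimetric one.
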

\begin{proof}Define $u^*(x)=u(-x)$ and $ \widetilde{u}(x)=u(x)-u^*(x).$ Then $ \widetilde{u}$ satisfies
\begin{equation}\label{eq:4.7}
\frac{\a}{2}\Delta \widetilde{u}+c(x)\widetilde{u}=0,\ \ \text{on}\ \ S^2,
\end{equation} where $$c(x):=\int_0^1e^{tu+(1-t)u^*}dt.$$ Notice that the eigenvalues of $-\Delta$ on $S^2$ are $\l_j=j(j+1)$ for $j\in \ZZ$, the eigenspaces $E_j=\ker(\Delta+\l_j)$ ($j\in \ZZ,\ j\geq0$) are orthogonal and complete in $L^2(S^2)$, $E_j$ is the space of $j$th spherical harmonics, in particular $E_0=\text{span}\{1\},\ E_1=\text{span}\{x_1,x_2,x_3\}$, $f(x)=f(-x)$ for $f\in E_j,\ j$ even  and $-f(x)=f(-x)$ for $f\in E_j,\ j$ odd. Let $P_j$ be the orthogonal projection from $L^2(S^2)$ to $E_j$ then we have $ P_ju^*=(-1)^jP_ju$ and $ P_j\widetilde{u}=0$ for $j$ even. Since $\l=2/\a>2,$ by Lemma \ref{4.2} we have $\int_{S^2}x_ie^u=\int_{S^2}x_i=0$ and
$$\int_{S^2}-2x_iu=\int_{S^2}\Delta x_iu=\int_{S^2}x_i\Delta u=\l\int_{S^2}x_i(1-e^u)=0,$$ which implies $P_1u=0$ and $P_1u^*=0,\ P_1\widetilde{u}=0.$ Now we have $ P_j\widetilde{u}=0$ for $j=0,1,2$ and \eqref{eq:4.7} implies that
\begin{align*}\int_{S^2}c(x)\widetilde{u}^2&=-\frac{\a}{2}\int_{S^2}\widetilde{u}\Delta \widetilde{u}=\frac{\a}{2}\sum_{j=0}^{+\infty}\l_j\|P_ju\|_{L^2(S^2)}^2
=\frac{\a}{2}\sum_{j=3}^{+\infty}\l_j\|P_ju\|_{L^2(S^2)}^2\\ &\geq\frac{\a}{2}\sum_{j=3}^{+\infty}\l_3\|P_ju\|_{L^2(S^2)}^2
=\frac{\a}{2}\l_3\sum_{j=0}^{+\infty}\|P_ju\|_{L^2(S^2)}^2=6\a\int_{S^2}\widetilde{u}^2.\end{align*}On the other hand since $e^u\leq 6\a$ in $S^2$ we have $e^{u^*}\leq 6\a$ in $S^2$ and $e^{tu+(1-t)u^*}\leq 6\a$ in $S^2$ for $0\leq t\leq1,$ thus $c(x)\leq6\a$ in $S^2$. Now we must have $c(x)\widetilde{u}^2=6\a\widetilde{u}^2 $ in $S^2$, if $\widetilde{u}(x)\neq0 $ for some $x\in S^2$ then we have $c(x)=6\a$ and $e^{tu(x)+(1-t)u^*(x)}= 6\a$ for $0\leq t\leq1,$ therefore $e^{u(x)}=e^{u^*(x)}= 6\a$, which is a contradiction. Thus $\widetilde{u}\equiv0 $ in $S^2,$ and $u(x)=u^*(x)=u(-x),$ by theorem \ref{1.1} we know that $u$ is axially symmetric. If $\a=\frac{1}{3}$ and $e^u\leq 2$ then $e^{u^*}\leq 6\a$ and $u(x)=u^*(x)=u(-x),$ by theorem \ref{1.1} we know that $u\equiv0.$ This completes the proof. \end{proof}

\section{Rigidity of Hawking mass for surface with even symmetry}

Now we can prove Theorem \ref{1.2}-\ref{1.5} as in \cite{Sun}, we still need the following lemmas in the proof. The following rigidity result is some kind of  positive mass theorem in compact case(see \cite{Miao}, \cite{ST}, and \cite{HW}).
\begin{lem}\label{sphere rigidity}
Let (M, g) be a compact orientable Riemannian 3-manifold with scalar curvature $R(g)\geq0$ and $\partial M$ isometric to round $S^2$ with mean curvature $H=2$. Then (M, g) is isometric to the unit ball in ($\mathbb{R}^3$, $\delta$).
\end{lem}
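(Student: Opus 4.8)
The plan is to recognize Lemma~\ref{sphere rigidity} as the rigidity case of the positive mass theorem for compact manifolds with boundary, in the Shi--Tam form and its corner/gluing refinement by Miao. First observe that the boundary data are the round metric on $S^2$, which has Gauss curvature $K\equiv 1>0$; hence by the Weyl embedding theorem $\partial M$ admits an isometric embedding into $\mathbb{R}^3$, unique up to a rigid motion, whose image is the standard unit sphere $\mathbb{S}^2$, with outward mean curvature $H_0\equiv 2$. The given boundary mean curvature is $H\equiv 2$, so $H\equiv H_0$ along $\partial M$, and in particular $\int_{\partial M}H\,d\sigma=\int_{\partial M}H_0\,d\sigma=8\pi$.

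Next I would build an asymptotically flat exterior extension. Using Bartnik's quasi-spherical ansatz and the Shi--Tam parabolic construction, there is a metric $g_+$ on $\mathbb{R}^3\setminus B_1$ of the form $g_+=\phi^2\,dr^2+r^2 g_{\mathbb{S}^2}$ with zero scalar curvature, inducing the round metric on $\partial B_1$ and with mean curvature of $\partial B_1$, computed from the exterior, equal to $H=2$. Glue $(M,g)$ to $(\mathbb{R}^3\setminus B_1,g_+)$ along $\partial M\cong\partial B_1$: the induced metrics agree and the two mean curvatures agree, so the glued manifold $(\widehat M,\widehat g):=(M,g)\cup_{\partial M}(\mathbb{R}^3\setminus B_1,g_+)$ is asymptotically flat with nonnegative scalar curvature in the distributional sense across the corner. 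By Miao's smoothing together with the positive mass theorem, $m_{ADM}(\widehat g)\geq 0$.

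Finally I would run the Shi--Tam monotonicity. Along the foliation $\{|x|=r\}$ of the extension, the quantity
\begin{equation*}
m(r)=\frac{1}{8\pi}\int_{\{|x|=r\}}\bigl(H^{\mathrm{eucl}}_r-H_{g_+}\bigr)\,d\sigma_r
\end{equation*}
is monotone non-increasing in $r$ and tends to $m_{ADM}(\widehat g)$ as $r\to\infty$, while $m(1)=\frac{1}{8\pi}\int_{\mathbb{S}^2}(2-2)\,d\sigma=0$. Hence $0\geq m_{ADM}(\widehat g)\geq 0$, so $m_{ADM}(\widehat g)=0$. The rigidity part of the positive mass theorem (in the corner setting) then forces $(\widehat M,\widehat g)$ to be isometric to flat $\mathbb{R}^3$; in particular $(M,g)$ is isometric to a compact domain in $\mathbb{R}^3$ whose boundary is a round unit sphere, which can only be the closed unit ball. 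I expect the main obstacle to be the technical handling of the corner term in the gluing and the invocation of the positive mass theorem for a merely Lipschitz metric; if one instead takes Shi--Tam's boundary inequality $\int_{\partial M}H_0\,d\sigma\geq\int_{\partial M}H\,d\sigma$ together with its equality characterization as a black box, the entire argument reduces to the one-line observation that both integrals equal $8\pi$, so equality holds and $(M,g)$ is a flat domain bounded by $\mathbb{S}^2$.
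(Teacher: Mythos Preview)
The paper does not give its own proof of this lemma; it is quoted as a known rigidity result with references to Miao \cite{Miao}, Shi--Tam \cite{ST}, and Hang--Wang \cite{HW}, and is used as a black box in the proof of Theorem~\ref{1.2}. Your sketch is correct and is precisely the Shi--Tam/Miao argument underlying those citations: embed the boundary as the unit sphere in $\mathbb{R}^3$ via Weyl, build the scalar-flat quasi-spherical extension, glue, invoke the positive mass theorem with corners, and use the Shi--Tam monotonicity to force $m_{ADM}=0$ and hence flatness. Your closing remark that the whole thing collapses to the equality case of the Shi--Tam boundary inequality $\int_{\partial M}H_0\,d\sigma\ge\int_{\partial M}H\,d\sigma$ is exactly how the authors are treating it. There is nothing to correct; you have simply unpacked what the paper cites.
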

To prove Theorem 1 we need a rigidity result for hyperbolic case of sphere, see Theorem 3.8 in \cite{ST07} by the first author and L. F. Tam.
\begin{lem}\label{hyperbolic sphere rigidity}
Let (M, g) be a compact orientable Riemannian 3-manifold with scalar curvature $R(g)\geq-6$ and $\partial M$ isometric to round $S^2$ with mean curvature $H=2\sqrt{2}$. Then (M, g) is isometric to the unit ball in hyperbolic space $\mathbb{H}^3$.
\end{lem}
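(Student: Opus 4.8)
The plan is to adapt Shi--Tam's ``fill-in and apply the positive mass theorem'' argument to the asymptotically hyperbolic setting. The first step is to recognise that the prescribed boundary data is \emph{exactly} that of a model sphere. In the coordinate model $g_{\mathbb H}=\frac{1}{1+r^2}dr^2+r^2g_{\mathbb S^2}$ of $\mathbb H^3$, the coordinate sphere $\{r=1\}$ has induced metric $g_{\mathbb S^2}$ (so it is isometric to the round unit $\mathbb S^2$), outward unit normal $\sqrt{1+r^2}\,\partial_r$, and mean curvature $\tfrac{2\sqrt{1+r^2}}{r}$, which equals $2\sqrt2$ at $r=1$; more generally a geodesic sphere of $\mathbb H^3$ isometric to a round sphere of radius $a$ has mean curvature $\tfrac{2\sqrt{1+a^2}}{a}$, so the requirement $H=2\sqrt2$ already pins down $a=1$. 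Write $B_0=\{r\le1\}\subset\mathbb H^3$ for ``the unit ball'' and $S_0=\partial B_0=\{r=1\}$; the hypothesis says precisely that $\partial M$ and $S_0$ have the same first and second fundamental forms, and it is $B_0$ that we must recover.

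Next, glue: let $\widehat M=M\cup_\varphi\{r\ge1\}$, where $\varphi:\partial M\to S_0$ is an isometry of the induced metrics and $\{r\ge1\}=\mathbb H^3\setminus\operatorname{int}B_0$ is the exterior region. Then $\widehat g$ is a Lipschitz metric on $\widehat M$, smooth off $S_0$, and because the outward mean curvature of $\partial M$ equals the mean curvature of $S_0$ as inner boundary of the exterior region (both are $2\sqrt2$, computed with the common normal), the singular part of the distributional scalar curvature of $\widehat g$ along $S_0$ vanishes; hence $R(\widehat g)\ge-6$ in the distributional sense ($\ge-6$ on $M$ by hypothesis, $\equiv-6$ on the exterior). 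Moreover the end of $\widehat M$ is literally a region of $\mathbb H^3$, so $\widehat M$ is asymptotically hyperbolic with vanishing mass vector; and $\widehat M$ is orientable, hence spin (every orientable $3$-manifold is).

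Now apply the rigidity part of the hyperbolic positive mass theorem. Either mollify $\widehat g$ across $S_0$ and conformally rescale to restore $R\equiv-6$ while keeping the hyperbolic asymptotics --- so the mass is still $0$ in the limit --- or invoke directly the corner version of the theorem. In either case, the equality statement of the positive mass theorem for asymptotically hyperbolic $3$-manifolds with $R\ge-6$ (Wang, Chru\'sciel--Herzlich, with the corner treated \`a la Miao) forces $\widehat M$ to be isometric to $\mathbb H^3$; in particular the corner turns out to have been a phantom, and the isometry carries $M$ onto $B_0$ and $\partial M$ onto $S_0$. Hence $(M,g)$ is isometric to the unit ball in $\mathbb H^3$.

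The whole difficulty is concentrated in the corner along $S_0$: one must show rigorously that the matching-mean-curvature condition (i) does not lower the mass and (ii) in the equality case still forces a \emph{global} isometry with $\mathbb H^3$, so that $\widehat g$ is a posteriori smooth across $S_0$. In the asymptotically flat case this is Miao's theorem (also McFeron--Sz\'ekelyhidi via Ricci flow / optimal transport, and Shi--Tam); here one needs its asymptotically hyperbolic counterpart, which is the technical content of the cited Theorem~3.8 in \cite{ST07}. A second, positive-mass-free route --- parallel to the proof of Lemma~\ref{sphere rigidity} --- would use a weighted Reilly formula: with the static potential $V$ of $\mathbb H^3$ as weight, solve a suitable $V$-elliptic Dirichlet problem on $(M,g)$, substitute into a Qiu--Xia type generalised Reilly identity, and use $R\ge-6$ together with the exact boundary data to show that $H=2\sqrt2$ on a round unit $\mathbb S^2$ is exactly the equality case; there the subtlety is the choice of the auxiliary equation and boundary values and the bookkeeping of the boundary integrals.
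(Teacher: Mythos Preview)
The paper does not prove this lemma at all; it simply quotes it as Theorem~3.8 of \cite{ST07}. Your proposal is therefore not being compared to a proof in the paper but rather to the cited result, and what you have written is a faithful outline of precisely the Shi--Tam strategy: glue $(M,g)$ to the hyperbolic exterior $\{r\ge 1\}\subset\mathbb H^3$ along the common boundary data, observe that the matching mean curvatures kill the distributional scalar curvature defect along the seam, note that the resulting manifold is asymptotically hyperbolic with zero mass, and invoke the rigidity case of the hyperbolic positive mass theorem with corners. Your boundary computations (induced metric $g_{\mathbb S^2}$ and $H=2\sqrt{1+r^2}/r=2\sqrt2$ at $r=1$) are correct, and you correctly flag that the genuine work is the corner version of the hyperbolic positive mass theorem, which is exactly what \cite{ST07} supplies. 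So your sketch is correct and is essentially the approach behind the cited reference; the alternative Reilly-type route you mention at the end is a legitimate second proof but is not what the paper (or \cite{ST07}) uses.
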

After Theorem 1, Lemma \ref{sphere rigidity} and Lemma \ref{hyperbolic sphere rigidity}, now we are in the position to prove Theorem \ref{1.2} and Theorem \ref{1.3}.

\begin{proof}[Proof of Theorem \ref{1.2} and Theorem \ref{1.3}]
If $m_H(\Sigma)= 0$ on stable $CMC$ surface $\Sigma$, without loss of generality, assume $|\Sigma|=4\pi$, then $H=2(resp. H=2\sqrt{2})$.
From the proof in \cite{Sun} we know that there exists a conformal map $ \varphi:\Sigma\to S^2\subseteq\RR^3$ such that $ \int_{\Sigma}\varphi=0.$ Denote the metric on $ \Sigma$ as $g=\varphi^*(e^ug_0)$ then $u$ satisfies $$\Delta u=6(1-e^u)\ \ \text{on}\ \ S^2.$$ Let $ \rho:\Sigma\to\Sigma$ be the isometry such that $ \rho^2=id$ and $ \rho(x)\neq x$ for $x\in\Sigma,$ then $ \widetilde{\rho}=\varphi\circ\rho\circ\varphi^{-1}:S^2\to S^2$ is a conformal map. Now we have \begin{align*}\varphi^*(e^ug_0)=g=\rho^*g=\rho^*\varphi^*(e^ug_0)=(\varphi\circ\rho)^*(e^ug_0)
\\=(\widetilde{\rho}\circ\varphi)^*(e^ug_0)=\varphi^*\widetilde{\rho}^*(e^ug_0),\end{align*} thus $(e^ug_0)=\widetilde{\rho}^*(e^ug_0). $ Assume $\widetilde{\rho}^*g_0=e^vg_0 $ then $u=u\circ\widetilde{\rho}+v.$ Since $\widetilde{\rho}^*g_0 $ has constant Gaussian curvature $1$, $v$ satisfies $ \Delta v=2(1-e^v).$ Now we have \begin{align*}6(1-e^u)=\Delta u=\Delta(u\circ\widetilde{\rho})+\Delta v=(\Delta u)\circ\widetilde{\rho}e^v+\Delta v=6(1-e^u)\circ\widetilde{\rho}e^v+\Delta v
\\=6(e^v-e^{u\circ\widetilde{\rho}+v})+\Delta v=6(e^v-e^{u})+2(1-e^v)=6(1-e^{u})-4(1-e^v),\end{align*} thus $e^v=1 , \ u=u\circ\widetilde{\rho}$ and $\widetilde{\rho}$ is an isometry of $S^2$. Now there exists an orthogonal matrix $A\in O(3)$ such that $\widetilde{\rho}x=Ax. $ Since $ \rho^2=id$, we have $ \widetilde{\rho}^2=id$ and $A^2=I_3,$ the eigenvalues of $A$ must be $\pm1.$ Since $ \rho(x)\neq x$ for $x\in\Sigma,$ we have $ \widetilde{\rho}(x)\neq x$ for $x\in S^2,$ and $1$ is not an eigenvalue of $A.$ Therefore the eigenvalues of $A$ must be $-1,$ which implies $A=-I_3$ and $\widetilde{\rho}(x)=-x,\ u(x)=u(-x). $
By Theorem \ref{1.1} we know $u=0$ and $\Sigma$ is standard $S^2$ in $\mathbb{R}^3$. Then by Lemma \ref{sphere rigidity}(resp. Lemma \ref{hyperbolic sphere rigidity}), we conclude that $\Omega$ isometric to unit ball in $\mathbb{R}^3(\mathbb{H}^3_{-1})$.
\end{proof}
\begin{rem}Since the Gaussian curvature of
 $ \Sigma$ is $K_{\Sigma}=3-2e^{-u}$ for $|\Sigma|=4\pi$, by Lemma \ref{3.3} we know that if the even symmetry assumption in Theorem
 \ref{1.2}-\ref{1.5} is replaced by $K_{\Sigma}{\dfrac{|\Sigma|}{4\pi}}\leq2, $
 then the result is still true.\end{rem}

Theorem \ref{1.2}(resp. Theorem \ref{1.3})can help us to understand Willmore functional in manifold with scalar curvature $R(g)\geq0(resp. R(g)\geq-6)$.
\begin{cor}
Let (M,g) be a complete Riemnnian three manifold with scalar curvature $R(g)\geq0(resp. R(g)\geq-6)$, $\Sigma=\partial \Omega$ is a stable CMC sphere, then $W(\Sigma)\leq 4\pi$.
\\If $\Sigma$ has even symmetry, then equality holds if and only if $\Sigma$ is standard $S^2$ and $\Omega$ isometric to unit ball in $\mathbb{R}^3(resp. \mathbb{H}^3)$.
\end{cor}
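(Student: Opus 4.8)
The plan is to recognize the Willmore bound as a restatement of the nonnegativity of the Hawking mass and then feed the equality case into the rigidity theorems already established. First I would record the elementary identity: by the two definitions of $W(\Sigma)$ we have $\int_\Sigma H^2 = 4W(\Sigma)$ when $R(g)\ge 0$ and $\int_\Sigma(H^2-4) = 4W(\Sigma)$ when $R(g)\ge -6$, so in either case the definition of the Hawking mass becomes
\[
m_H(\Sigma) \;=\; \frac{|\Sigma|^{1/2}}{(16\pi)^{3/2}}\bigl(16\pi - 4W(\Sigma)\bigr).
\]
Since the prefactor is positive, the inequality $W(\Sigma)\le 4\pi$ is literally equivalent to $m_H(\Sigma)\ge 0$, and $W(\Sigma)=4\pi$ if and only if $m_H(\Sigma)=0$; no normalization of $|\Sigma|$ is needed.

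For the inequality itself I would invoke the nonnegativity of the Hawking mass of a stable CMC sphere bounding a domain: when $R(g)\ge 0$ this is the theorem of Christodoulou and Yau \cite{CY}, and when $R(g)\ge -6$ it is the corresponding hyperbolic statement for the modified Hawking mass. This gives $W(\Sigma)\le 4\pi$ at once.

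For the equality characterization, assume $\Sigma$ has even symmetry. If $W(\Sigma)=4\pi$ then $m_H(\Sigma)=0$, and Theorem \ref{1.2} (resp. Theorem \ref{1.3}) applies to conclude that $\Omega$ is isometric to the unit ball in $\mathbb{R}^3$ (resp. $\mathbb{H}^3$) and $\Sigma$ is isometric to the standard round $S^2$. Conversely, if $\Omega$ is the unit ball in $\mathbb{R}^3$ then $H\equiv 2$ and $|\Sigma|=4\pi$, so $\int_\Sigma H^2 = 16\pi$ and $W(\Sigma)=4\pi$; if $\Omega$ is the unit ball in $\mathbb{H}^3$ then $H\equiv 2\sqrt 2$ (as in Lemma \ref{hyperbolic sphere rigidity}), so $\int_\Sigma(H^2-4)=16\pi$ and again $W(\Sigma)=4\pi$.

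There is essentially no analytic obstacle here beyond quoting the nonnegativity of the (modified) Hawking mass for stable CMC spheres, which is not reproved; the one point deserving care is that in the hyperbolic setting the cited nonnegativity statement must cover the class of stable CMC spheres considered, after which the equality case is a direct consequence of Theorems \ref{1.2} and \ref{1.3} together with the definitions.
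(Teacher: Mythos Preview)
Your proposal is correct and matches the paper's intended argument: the paper states this corollary immediately after proving Theorems \ref{1.2} and \ref{1.3} without a separate proof, so the implicit reasoning is precisely the translation $W(\Sigma)\le 4\pi \Longleftrightarrow m_H(\Sigma)\ge 0$ via the definitions, Christodoulou--Yau \cite{CY} (and its hyperbolic analogue) for the inequality, and Theorems \ref{1.2}/\ref{1.3} for the equality case. Your caveat that the hyperbolic nonnegativity of the modified Hawking mass needs to be cited is well taken, since the paper does not give an explicit reference for that case.
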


In order to prove Theorem \ref{1.4} we need the following isoperimetric inequality  of \cite{Shi} which also plays a key role in proving the existence of isoperimetric surface for all volume in $AF$ three manifold. It says that if there exists a Euclidean ball in an AF manifold with nonnegative scalar curvature, then the AF manifold must be $\mathbb{R}^3$.
\begin{lem}\cite{Shi} \label{shi rigidity}
Suppose (M, g) is an AF manifold with scalar curvature $R(g)\geq0$.
Then for any $V>0$ \begin{eqnarray}I(V)\leq(36\pi)^{\frac{1}{3}}V^{\frac{2}{3}}.\end{eqnarray}
There is a $V_0 > 0$ with
\begin{eqnarray}I(V_0)=(36\pi)^{\frac{1}{3}}V_0^{\frac{2}{3}}\end{eqnarray}
if and only if (M, g) is isometric to $\mathbb{R}^3$.
\end{lem}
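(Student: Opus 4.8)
The plan is to combine the existence and regularity theory of isoperimetric regions with the Christodoulou--Yau nonnegativity of the Hawking mass and a one--dimensional comparison for the isoperimetric profile $I$.

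For the inequality, fix $V>0$ and assume for the moment that an isoperimetric region $\Omega_V$ of volume $V$ exists and is regular; in the asymptotically flat setting this is already a substantial point, requiring one to rule out concentration of volume at the asymptotic end. Its boundary $\Sigma_V$ has constant mean curvature $H_V$ and satisfies the stability inequality; a Gauss equation computation combined with Gauss--Bonnet and stability yields a genus bound which, together with the standard topological properties of isoperimetric boundaries, shows that $\Sigma_V$ is a single topological sphere. Thus $\Sigma_V$ is a stable CMC sphere in a manifold with $R(g)\ge 0$, so by Christodoulou--Yau \cite{CY} its Hawking mass is nonnegative, and since $H_V$ is constant with $|\Sigma_V|=I(V)$ this reads
\begin{equation*}
H_V^{\,2}\,I(V)\le 16\pi .
\end{equation*}
By the first variation of area, $\big(I'(V)\big)^{2}=H_V^{2}$ wherever $I$ is differentiable, so $I$ satisfies $\big(I'(V)\big)^{2}I(V)\le 16\pi$ for almost every $V$. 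Setting $J=I^{3}$ this becomes $\big|(\sqrt{J})'\big|\le 6\sqrt{\pi}$ a.e.; since $I(V)\,(36\pi)^{-1/3}V^{-2/3}\to 1$ as $V\to 0^{+}$ we have $\sqrt{J(0^{+})}=0$, and the differential inequality is compatible with this initial behaviour, so integration gives $\sqrt{J(V)}\le 6\sqrt{\pi}\,V$, i.e. $I(V)\le(36\pi)^{1/3}V^{2/3}$.

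For the rigidity, suppose $I(V_0)=(36\pi)^{1/3}V_0^{2/3}$ for some $V_0>0$. Then equality must propagate back along the comparison, so all the inequalities above become equalities on $(0,V_0]$; in particular $m_H(\Sigma_V)=0$ for every $V\in(0,V_0]$. The equality case of the Christodoulou--Yau estimate forces each $\Sigma_V$ to be a totally umbilic round sphere with a flat collar, so the nested family $\{\Omega_V\}_{0<V\le V_0}$ sweeps out a region of $(M,g)$ isometric to a Euclidean ball of volume $V_0$. It then remains to prove the separate statement that an asymptotically flat $(M,g)$ with $R(g)\ge 0$ containing an isometric copy of a Euclidean ball must be $(\mathbb{R}^3,\delta)$; for this I would pass to the exterior $M\setminus\Omega_{V_0}$, which is asymptotically flat, has $R(g)\ge 0$, and has round inner boundary with the mean curvature of a coordinate sphere, and apply a positive--mass--theorem--with--boundary estimate (in the spirit of \cite{ST} and \cite{Miao}) to bound $m_{\mathrm{ADM}}(M,g)$ above by $0$; with the positive mass theorem this forces $m_{\mathrm{ADM}}(M,g)=0$ and hence $(M,g)\cong(\mathbb{R}^3,\delta)$. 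The converse --- that $\mathbb{R}^3$ realizes equality for every $V$ --- is the classical Euclidean isoperimetric inequality.

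I expect the main obstacles to be the following. First, the existence and structure of isoperimetric regions for \emph{all} volumes in the noncompact setting: ruling out loss of volume to the asymptotic end, and in particular proving that the minimizer has connected boundary, without which the passage from $\int_{\Sigma_V}H_V^{2}\le 16\pi$ to $H_V^{2}I(V)\le 16\pi$ breaks down. Second, upgrading the pointwise bound $H_V^{2}I(V)\le 16\pi$ to a rigorous differential inequality for the a priori only H\"older continuous profile $I$, which calls for a viscosity/comparison formulation of the first (and second) variation of area. The rigidity step additionally rests on the nontrivial ``flat ball implies $(\mathbb{R}^3,\delta)$'' statement, itself a boundary version of the positive mass theorem.
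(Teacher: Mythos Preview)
The paper does not prove this lemma at all: it is quoted verbatim from \cite{Shi} and used as a black box in the proof of Theorem~\ref{1.4}. There is therefore no ``paper's own proof'' to compare your attempt against.

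That said, your sketch is in the spirit of the argument in \cite{Shi} (and Bray's thesis \cite{Br}): nonnegativity of the Hawking mass on stable CMC spheres feeds into a differential inequality for the isoperimetric profile, whose integration yields the Euclidean comparison, with rigidity coming from tracing the equality case and invoking a positive-mass-with-boundary result. You have also correctly flagged the genuine technical obstacles (existence of isoperimetric regions for all volumes without drift to infinity, connectedness of the minimizing boundary, and the low regularity of $I$), each of which requires real work in \cite{Shi}. If you want a self-contained account, that reference is where to look; for the purposes of the present paper the lemma is simply imported.
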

Also there has an analogous result for isoperimetric profile on AH manifold, see Propostion 3.3 in \cite{JSZ}.
\begin{lem}\cite{JSZ} \label{hyperbolic shi rigidity}
Suppose (M, g) is an AH manifold with scalar curvature $R(g)\geq-6$.
Then for any $V>0$ \begin{eqnarray}I(V)\leq I_{\mathbb{H}}(V).\end{eqnarray}
There is a $V_0 > 0$ with
\begin{eqnarray}I(V_0)=I_{\mathbb{H}}(V_0)\end{eqnarray}
if and only if (M, g) is isometric to $(\mathbb{H}^3, g_{\mathbb{H}}) $.
\end{lem}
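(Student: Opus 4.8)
The plan is to run the hyperbolic counterpart of the argument that proves the asymptotically flat inequality in \cite{Shi} (Lemma \ref{shi rigidity}), with the Euclidean model $I_{\mathbb{R}^3}(V)=(36\pi)^{1/3}V^{2/3}$ replaced by the hyperbolic profile $I_{\mathbb{H}}$ and the Willmore bound $\int_\Sigma H^2\le 16\pi$ replaced by $\int_\Sigma(H^2-4)\le 16\pi$. The single geometric input is the nonnegativity of the Hawking mass for stable CMC spheres. By the hyperbolic analogue of the Christodoulou--Yau estimate \cite{CY}, any sphere $\Sigma$ that is CMC and volume-preserving stable (i.e. satisfies the stability condition with $\int_\Sigma f=0$) in a manifold with $R(g)\ge -6$ has $m_H(\Sigma)\ge 0$, equivalently $\int_\Sigma(H^2-4)\le 16\pi$. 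An isoperimetric boundary is exactly such a surface (each component being a stable CMC sphere, the genus-zero property following, as in \cite{Shi}, from stability together with $R(g)\ge -6$), so this bound is available all along the isoperimetric family.

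First I would record the model identities: the geodesic sphere of radius $\rho$ in $\mathbb{H}^3$ has area $A=4\pi\sinh^2\rho$ and mean curvature $H=2\coth\rho$, so that $\int_\Sigma(H^2-4)=16\pi$ with equality, and differentiating the area and the enclosed volume in $\rho$ gives $I_{\mathbb{H}}'(V)=H$, hence the autonomous relation $(I_{\mathbb{H}}'(V))^2=4+\tfrac{16\pi}{I_{\mathbb{H}}(V)}$, which the model saturates. On $(M,g)$, at every volume where the profile is realized by an isoperimetric region and is differentiable, the first variation of area gives $I'(V)=H$ with $H$ constant, while the Hawking mass bound gives $H^2\le 4+\tfrac{16\pi}{I(V)}$; combining, $(I'(V))^2\le 4+\tfrac{16\pi}{I(V)}$ almost everywhere. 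Reparametrizing by the area $A=I(V)$ turns this into $\tfrac{dV}{dA}\ge\big(4+\tfrac{16\pi}{A}\big)^{-1/2}=\tfrac{dV_{\mathbb{H}}}{dA}$, where $V_{\mathbb{H}}(A)$ is the volume of the hyperbolic ball of boundary area $A$. Since $V(A)$ and $V_{\mathbb{H}}(A)$ both vanish as $A\to 0$ (small isoperimetric regions are nearly Euclidean balls), integration gives $V(A)\ge V_{\mathbb{H}}(A)$: at fixed boundary area, $M$ encloses at least as much volume as $\mathbb{H}^3$. As $V$ and $V_{\mathbb{H}}$ are strictly increasing, inverting this yields $I(V)\le I_{\mathbb{H}}(V)$ for all $V>0$.

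For the rigidity statement, suppose $I(V_0)=I_{\mathbb{H}}(V_0)$ for some $V_0>0$, and set $A_0=I(V_0)$. The integral comparison above is then saturated at $A_0$, forcing $\tfrac{dV}{dA}=\tfrac{dV_{\mathbb{H}}}{dA}$ for almost every $A\le A_0$; hence $(I'(V))^2=4+\tfrac{16\pi}{I(V)}$ and $m_H(\Sigma_V)=0$ for every $V\in(0,V_0]$, so the region $\Omega_0$ of volume $V_0$ is foliated by zero-Hawking-mass isoperimetric spheres. I would then invoke the equality case of the hyperbolic Christodoulou--Yau estimate: $m_H(\Sigma_V)=0$ forces each leaf to be totally umbilic with $R(g)=-6$ and $\mathrm{Ric}(n,n)=-2$ along it, so the leaves are intrinsically round and the foliation is the geodesic-sphere foliation of a hyperbolic ball; for the leaf of area $4\pi$, where $H=2\sqrt2$, this is precisely Lemma \ref{hyperbolic sphere rigidity}, and the remaining leaves follow by the same argument at the corresponding radii. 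Finally, the set of $V\in(0,V_0]$ for which the enclosed region is isometric to a hyperbolic ball is nonempty, open and closed, so $\Omega_0$ is a hyperbolic ball; combined with the AH asymptotics, this uniquely determined hyperbolic structure propagates to all of $M$, giving $(M,g)\cong(\mathbb{H}^3,g_{\mathbb{H}})$.

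As in the flat case, the main obstacle is the equality analysis and its globalization, not the inequality itself. Two points require genuine care. First, isoperimetric regions may fail to exist for large $V$, since a minimizing sequence can lose mass to the hyperbolic end; this must be treated by the concentration-compactness splitting of the profile, in which escaping pieces contribute exactly $I_{\mathbb{H}}$, so the inequality persists but one must show that genuine escape is incompatible with the equality $I(V_0)=I_{\mathbb{H}}(V_0)$. Second, upgrading the local hyperbolic structure on $\Omega_0$ to a global isometry requires ruling out that hyperbolicity ceases to propagate before reaching the end, which is where the open-closed continuation and the AH decay assumptions must be combined. I expect these two steps---the concentration-compactness bookkeeping and the continuation to infinity---to contain the real difficulty, the differential-inequality comparison being routine once the Hawking mass bound is in hand.
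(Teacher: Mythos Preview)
The paper does not prove this lemma: it is quoted verbatim from \cite{JSZ} (see the citation in the lemma header and the sentence ``Also there has an analogous result for isoperimetric profile on AH manifold, see Proposition 3.3 in \cite{JSZ}'' immediately preceding it), so there is no proof in the present paper to compare your proposal against. Your outline is the natural hyperbolic adaptation of Shi's argument in \cite{Shi}---Christodoulou--Yau gives $m_H\ge 0$ on stable CMC spheres, this translates into the differential inequality $(I')^2\le 4+16\pi/I$, and comparison with the model profile yields $I\le I_{\mathbb{H}}$---and this is indeed the strategy pursued in \cite{JSZ}. You have also correctly flagged the two genuine technical points (existence/escape of isoperimetric regions via concentration-compactness, and propagation of the local hyperbolic structure to the whole manifold in the equality case), which is where the work in \cite{JSZ} lies; for the purposes of this paper the result is simply imported.
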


Now we can prove the rigidity of isoperimetric surface with even symmetry:
\begin{proof}[Proof of Theorem \ref{1.4} and Theorem \ref{1.5}]
If there is an isoperimetric surface  $\Sigma$ with even symmetry and $m_H(\Sigma)= 0$, assume $|\Sigma|=4\pi$, then $H=2$. By Theorem \ref{1.2}, the isoperimetric region is a Euclidean ball of volume $\frac{4}{3}\pi$. So we have
\begin{eqnarray}
4\pi=I(\frac{4}{3}\pi)=(36\pi)^{\frac{1}{3}}(\frac{4}{3}\pi)^{\frac{2}{3}},
\end{eqnarray}
by the rigidity part of Lemma \ref{shi rigidity}, we conclude that $(M, g)$ is isometric to $\mathbb{R}^3$.
\\Theorem \ref{1.5} follows similarly by Theorem \ref{1.3} and Lemma \ref{hyperbolic shi rigidity}.
\end{proof}

\section{A final remark}

In this section, we give a sketched proof for the existence of nonzero axially symmetric solutions of \eqref{eq:2} for $\frac{1}{4}\le \a < \frac{1}{2}$ and
$\a\not= \frac{1}{3}$. We will also discuss the uniqueness of these solutions.

We will adopt some notations in \cite{Lin}. Let $0<\a<1$ and $u$ a solution of \eqref{eq:2}, we write $\l=2/\a=l+2$ and
$$\widetilde{u}(y)\,=\,{u}(\Pi^{-1}(y))-\l\ln(1+|y|^2)\,+\,\ln(8/\a),$$
where $\Pi^{-1} $ is defined in Section 4. Then $\widetilde{u} $ satisfies
\begin{eqnarray}\label{3.16}
\Delta\widetilde{u}\,+\,(1+|y|^2)^le^{\widetilde{u}}\,=\,0,~~\text{in}\  \ \RR^2
\end{eqnarray}
and
\begin{eqnarray}
\int_{\RR^2}\,(1+|y|^2)^{l}\,e^{\widetilde{u}(y)}\,dy\,=\,4\pi\l.\end{eqnarray}
Let $w(r;s) $ be the unique solution of
\begin{eqnarray}\label{3.18}
\left\{\begin{array}{l}w''+\dfrac{w'}{r}+(1+r^2)^le^{w}\,=\,0,\\ w(0;s)=s,\ \ w'(0;s)=0,
\end{array}\right.
\end{eqnarray}
then $w(r;s) $ exists for all $r>0$ and always satisfies the asymptotic
behavior
\begin{eqnarray}
u(r;s)\,=\,-\b(s)\ln r\,+\,O(1)\ \ \text{at}\ \ \infty.
\end{eqnarray}
Integrating \eqref{3.16}, we can compute $\b(s)$ through the following formula:
\begin{eqnarray}2\pi\b(s)\,=\,\int_{\RR^2}\,(1+|y|^2)^{l}\,e^{w(|y|;s)}\,dy.\end{eqnarray}
Now the axially symmetric solutions about the $x_3$ axis of \eqref{eq:2} are of the form
$${u}(\Pi^{-1}(y))\,-\,\l\ln(1+|y|^2)\,+\,\ln(8/\a)\,=\,w(|y|;s),\ \ \b(s)\,=\,2\l,$$
and the number of such solutions equals to that of solutions of $\b(s)\,=\,2\l.$
We know from Lemma 4.1 in \cite{Lin} that if $l>1$,
then
$$\lim\limits_{s\to+\infty}\,\b(s)\,=\,4l~~~{\rm and}~~~\lim\limits_{s\to-\infty}\,\b(s)\,=\,2(2+2l).$$
These imply that for $\l>4$, we have $l>2$ and
$$2\l\,<\,\lim\limits_{s\to+\infty}\,\b(s)\,<\,\lim\limits_{s\to-\infty}\,\b(s).$$
Set $\b_0=\min\limits_{s\in\RR}\b(s)$, if $\b_0<2\l$, then $\b(s)=2\l$ has at least two solutions, consequently,
\eqref{eq:2} has at least two solutions which are axially symmetric about the $x_3$ axis.
On the other hand, the solution of \eqref{3.18} given by
$$w(r;s)\,=\,-\l\ln(1+|y|^2)\,+\,\ln(8/\a),\ s\,=\,\ln(8/\a)\,=\,\ln(4\l)$$
corresponds to the trivial solution $u\equiv 0$ on $S^2$, therefore,
we have $\b(s)=2\l$ for $s=\ln(4l)$, if $\dot{\b}(s)=0$ for $s=\ln(4l)$, then the linearized equation of \eqref{eq:2} at $u\equiv0$ must be singular and $\l$ is an eigenvalue of $-\Delta$ on $S^2$, $\l=k(k+1)$ for a nonnegative integer $k$. Therefore, if $4<\l\leq8$ and $\l\neq6$, then $\dot{\b}(s)\neq0$ for $s=\ln(4l)$ and $\ln(4l)$ is not the minimum point of $\b(s)$, $\b_0<\b(\ln(4l))=2\l$ and \eqref{eq:2} has nonzero axially symmetric solutions.

We can also prove the uniqueness of nonzero axially symmetric solutions about the $x_3$ axis for $0<|\l-6|<\d_0$ for a sufficiently small $\d_0$. In fact,we have $\ddot{\b}(s)>0$ for $s=\ln(4l),\ \l=6,$ and $\ddot{\b}(s)$ is continuous with respect to $s$ and $\l,$ which implies $\ddot{\b}(s)>0$ for $|s-\ln(4l)|<\d_1,|\l-6|<\d_1,$ here $\d_1>0$ is a constant, therefore $\b(s)=2\l,\ |s-\ln(4l)|<\d_1,$ has at most two solutions for $|\l-6|<\d_1,$ here $\d_1>0$ is a constant. Now we only need to prove the existence of $0<\d_0<\d_1$ such that axially symmetric solutions of \eqref{eq:2} satisfy $|u|<\d_1$ for $|\l-6|<\d_0,$ if this is not true then there exists a sequence of axially symmetric solutions $u_i$ about the $x_3$ axis of \eqref{eq:2} with $\l=\l_i$ tending to $6$ such that $\|u_i\|_{L^{\infty}}\geq \d_1$, by using a result of Li in \cite{Li} and taking a subsequence if necessary, we may assume that $u_i$ converge uniformly to a solution of \eqref{eq:2} with $\l=6$ or blowing up at $k$ points. In the first case, by Proposition \ref{axis sym}, we have $u_i$ converge uniformly to 0, a contradiction. In the second case, we have $\l=6=2k,$ since $u_i$ are axially symmetric about the $x_3$ axis, the blow up points must lie in $\{(0,0,\pm1)\}$ and $k\leq2$, this leads to a contradiction. This will enable us to conclude the uniqueness of nonzero axially symmetric solutions about the $x_3$ axis for $0<|\l-6|<\d_0$.

It would be very interesting to study exactly how many non-zero solutions of \eqref{eq:2} for any given $\alpha$
(module rotations) and whether or not they are axially symmetric.

\section{Acknowledgements}
We are very grateful to Juncheng Wei for suggesting us the sphere covering inequality of Gui and Moradifam \cite{Gui}. The second author also want to thank Jie Qing for pointing out the relation between rigidity of Hawking mass and eigenvalue problem. Many thanks to Changfeng Gui for many discussions on mean field equation.

\end{document}